\newcounter{mnotecount}
\newcommand{\mnotex}[1]
{\protect{\stepcounter{mnotecount}}$^{\mbox{\footnotesize $\bullet$\themnotecount}}$
\marginpar{
\raggedright\tiny\em
$\!\!\!\!\!\!\,\bullet$\themnotecount: #1} }
\newcommand{\D}{\mathcal{D}}
\newtheorem{theorem}{Theorem}[section]
\newtheorem{lemma}[theorem]{Lemma}
\newtheorem{proposition}[theorem]{Proposition}
\theoremstyle{definition}
\newtheorem{definition}[theorem]{Definition}
\newtheorem{remark}[theorem]{Remark}
\title{}
\author{}
\begin{document}

\title{Conformal Einstein spaces and conformally covariant operators}

\author{ Alfonso Garc\'\i a-Parrado\footnote{e-mail: agparrado@uco.es},
	Jónatan Herrera\footnote{e-mail: jherrera@uco.es}
	and Miguel Vadillo\footnote{f92vacam@uco.es} \\
	Departamento de Matem\'aticas, Universidad de C\'ordoba,\\
	Campus de Rabanales, 14071, C\'ordoba, Spain\\
}
\maketitle

\begin{abstract}
In this article we give general neccessary and sufficient conditions
to ensure that a pseudo-Riemannian manifold is conformal to an Einstein
space. These conditions are algorithmic in \emph{the metric tensor}
whenever the Weyl endomorphism is invertible. Our conditions
depend in an essential manner on the $\mathcal{C}$-connection.
We also show how to construct \emph{conformally covariant, pseudo-differential}
operators which has an independent interest.
\end{abstract}

\maketitle

\section{Introduction}

Conformal structures play a fundamental role in both Mathematics and Physics. In Mathematics, for instance, conformal structures are essential in various fields, including complex analysis, computation, and computer graphics. In Physics, fundamental interactions—such as Maxwell's equations—and the causal structure of spacetime depend strictly on the underlying conformal structure of the model.

As it is well known, unlike a metric structure, a conformal structure associates to each point of the manifold a family of metrics, all related by a conformal factor. Although these metrics share certain characteristics, such as angle measurement, they differ in other elements. This is the case for the scalar curvature, where the relationship between $R[g]$ and $R[\tilde{g}]$ (with $g_{ab} = \Omega^2 \tilde{g}_{ab}$) is given by the following expression in dimension $n$:

\begin{equation}
\label{eq:main:1}
R[g] = \Omega^{-2} \left( R[\tilde{g}] - 2(n-1)\Delta \ln \Omega - (n-1)(n-2)|\nabla \ln \Omega|^2 \right).
\end{equation}

We say that a tensor $T[g]$ is conformally covariant with weight $s\in\mathbb{R}$ if it satisfies the relation $T[\tilde{g}]=\Omega^s T[{g}]$. The case $s=0$ is called conformally invariant. Observe that the metric tensor itself is covariant with weight $-2$, while other tensors, such as the Weyl tensor, are conformally invariant ($s=0$).

A classical problem in conformal geometry is the \emph{curvature prescription problem}; that is, determining whether there exists a metric within the conformal class with a specific curvature. The classical example of such problems is the so-called \emph{Yamabe problem} \cite{yamabe1960deformation} (see also \cite{brendle2010recentprogressyamabeproblem}), which studies the existence of a metric conformal to a given one with constant scalar curvature. This problem allow us to determine for instance if a certain surface shares the shape of one of the model spaces: the sphere, the plane, or the hyperbolic space. The study of this problem relies on the conformal Laplacian (or Yamabe operator), a modification of the standard Laplacian that is conformally covariant. In the compact case, the Yamabe problem in dimension $2$ is consequence of the uniformization problem. In dimension $D\geq 3$ the problem was solved in several steps involving works of Aubin \cite{Aubin_1986}, Trudinger \cite{trudinger1968} and Schoen \cite{Schoen_1984}.

Observe however that for dimensions bigger than $2$, both this operator and the scalar curvature lose some of their geometric and topological meaning. In fact, the Gauss-Bonnet theorem, which links the geometry given by the scalar curvature to the topology of the surface, is no longer valid, becoming necessary to consider other curvatures and operators.

In dimension $4$, the Paneitz operator \cite{Paneitz2008} and the so-called $Q$-curvature \cite{Branson_1991} appear naturally. The latter plays a role analogous to the scalar curvature in dimension $2$, providing a Gauss-Bonnet type result in dimension $4$ \cite{chang_what_2008}. Furthermore, both arise naturally in different areas of physics, such as quantum field theory (where the operator arises naturally in the study of the trace anomaly of the stress-energy tensor for a scalar field \cite{Fradkin_1982}) or the AdS/CFT correspondence. Additionally, in dimension $4$, the Paneitz operator transforms under conformal changes just as the Laplacian does in dimension $2$, becoming crucial for solving the $4$-dimensional analogue of the Yamabe problem: prescribing the $Q$-curvature.

This framework was generalized to arbitrary even dimensions thanks to the work of Graham, Jenne, Mason, and Sparling \cite{GJMSOperator1992}. In their work, they constructed a family of operators, known as \emph{GJMS operators}, which are conformal operators of appropriate weights. Similarly, the so-called $Q_{2k}$-curvatures are constructed, which generalize both the $Q$-curvature and the scalar curvature, thus providing a unified framework for these studies.

\smallskip

Our work follows a similar line of research. However, our goal is to determine if a spacetime is \emph{conformally Einstein}; that
is, if there exists a metric within the conformal class that is an Einstein metric. This problem has a long-standing history,
dating back to Brinkmann \cite{brinkmann_riemann_1924,brinkmann_einstein_1925}, Schouten \cite{schouten_ricci-calculus_1954} and
Szekeres \cite{szekeres_spaces_1963}. In the 80's, Kozameh, Newman and Tod \cite{kozameh_conformal_1985} provide a set of
conditions ensuring when a $4$-dimensional Lorentzian manifold is conformally Einstein. In 2006, Gover and Nurowski
\cite{gover_obstructions_2006} generalize such set of conditions to arbitrary dimension.

Our aim is to approach this problem from previous perspective of a curvature prescription problem, so we provide a set of conditions more amenable from a computational standpoint. To this end, we will consider two main ingredients. On the one hand, we will substitute curvatures with the more general notion of metric concomitant, that is, a quantity obtained in general in terms of the metric tensor, its inverse and the Levi-Civita connection. As it is clear, any curvature mentioned before is in fact a metric concomitant. On the other hand, we will make use of the so-called $\mathcal{C}$-connection. This connection has already been used by one of the authors to characterize Conformally Einstein spacetimes in \cite{garciaparrado2024einstein}. In that previous work, the author considered the $4$-dimensional case and successfully characterized such spaces under the assumption that the Weyl tensor was non-degenerate. The goal of the present paper is to remove both restrictions, providing a fully general result and a systematic, computational methodology to determine such spacetimes.

\medskip

The paper is organized as follows. Section~\ref{prelconfgeo} introduces the foundational framework used throughout the paper, as well as key concepts such as conformally covariant metric concomitants. Section~\ref{confcovdiff} is devoted to conformally covariant differentiation. Here, we establish Lemma~\ref{lemm:fund}, which relates the transition tensor between the Levi-Civita connections of two conformal metrics to their corresponding Schouten and Weyl tensors. Furthermore, we develop a general notion of conformal differentiation for conformally covariant metric concomitants.

In Section~\ref{cconnection}, we specialize this framework to conformally invariant concomitants, defining the notion of $\mathcal{C}$-connection. This connection serves as the fundamental tool for our characterization of conformally Einstein spacetimes. Section~\ref{charEinstein} presents the main results of the paper, Theorems~\ref{thm:main1} and \ref{thm:main2}. These theorems provide the necessary and sufficient conditions to determine whether a spacetime is conformally Einstein, covering cases where the Weyl endomorphism is both invertible and non-invertible. Finally, these conditions are applied in Section~\ref{applications} to two scenarios of interest: the characterization of conformally flat spacetimes and the analysis of conformally Einstein Brinkmann spacetimes.

\section{Notions of conformal geometry}\label{prelconfgeo}

Let $\mathcal{M}$ be a smooth $D$-dimensional manifold with $D\geq 3$
and define on $\mathcal{M}$ two pseudo-Riemannian metrics: $\tilde{g}
_{ab}$ (which we will name physical metric), and $g_{ab}$ (which will be
named unphysical metric). These two pseudo-Riemannian spaces $
(\mathcal{M},\tilde{g}_{ab})$ and $(\mathcal{M},g_{ab})$ are said to be
conformally related if they follow the relation

\begin{equation}
	g_{ab}=\Omega^2\tilde{g}_{ab}
	\label{conformal relation}
\end{equation}

The conformal factor $\Omega^2$ is assumed to be a smooth positive function which does not vanish in $\mathcal{M}$.
The conformal relation (\ref{conformal relation}) states that the spaces $(\mathcal{M},\tilde{g}_{ab})$ and $
(\mathcal{M},g_{ab})$ have the same \emph{conformal structure}.
Also, this equation gives us the explicit relation between the
contravariant metric tensors

\begin{equation}
	g^{ab}=\frac{\tilde{g}^{ab}}{\Omega^2}
	\label{inverse conformal relation}
\end{equation}

Let us denote by $T(\mathcal{M})^r_s$ the bundle of
$r$-contravariant and $s$-covariant tensors defined in the standard way. Also abstract index
notation with small latin indices will be used throughout.
Indices enclosed within round or square brackets denote symmetrization or
antisymmetrization respectively.
For all tensors, indices will always be raised and lowered using the unphysical
metric $g_{ab}$, except for the physical metric, $\tilde{g}_{ab}$, for which the usual notation
for its inverse metric
will be used
\begin{equation}
	\tilde{g}^{ab}\tilde{g}_{ac}=\delta^b_c.
	\label{unphysical kronecker delta}
\end{equation}
$\nabla_a$ and $\tilde{\nabla}_a$ are the respective torsion-free Levi-Civita
connections of $g_{ab}$ and $\tilde{g}_{ab}$. From them one defines
the curvature tensors in the standard fashion.
Our conventions for the (unphysical) Riemann, Ricci and Weyl tensors are

\begin{equation}
	\nabla_a \nabla_b \omega_c - \nabla_b \nabla_a \omega_c= R_{abc}{}^{d}{} \omega_d,
	\label{riemann tensor definition}
\end{equation}
\begin{equation} 
	R_{ac} \equiv R_{abc}{}^{b}{},
	\label{ricci tensor definition}
\end{equation}
\begin{equation}
	C_{abc}{}^{d}{} \equiv R_{abc}{}^{d}{} - 2 L_{[b}{}^{d}{} g_{a]c} - 2 \delta_{[b}{}^{d}{} L_{a]c},
	\label{weyl tensor definition}
\end{equation}

where $L_{ab}$ is the unphysical Schouten tensor, defined by
\begin{equation}
	L_{ab} \equiv \frac{R_{ab}}{D-2} - \frac{g_{ab}R}{2(D-1)(D-2)},
	\label{schouten tensor definition}
\end{equation}
and $R=g^{ab}R_{ab}$ is the Ricci scalar. Also, as we did
before with the metric and the Levi-Civita connection,
tensors defined in terms of the physical metric $
\tilde{g}_{ab}$ will be denoted with a tilde over the
symbol employed for the unphysical spacetime tensor. The
difference between the Levi-Civita connection
coefficients of the physical and the unphysical metrics
is called the transition tensor $\Gamma[\tilde{\nabla},
\nabla]^l{}_{ac}$ given by
\begin{equation}
	\Gamma[\nabla, \tilde{\nabla}]^{l}{}_{ac} = 2 \delta^{l}{}_{(a} \Upsilon_{c)} - g_{ac} g^{lr}\Upsilon_r,
	\label{eq:transition-tensor}
\end{equation}

where we defined

\begin{equation}
	\Upsilon_a \equiv \frac{\nabla_a \Omega}{\Omega}.
	\label{upsilon definition}
\end{equation}

More information about this transition tensor can be found in
\cite{juanbook}.
Now, we have all the information necessary to
make the introduction of the concept of conformally covariant metric
concomitant,
which will be essential for the development of this work.
\begin{definition}
	\emph{Metric concomitant:}
	A metric concomitant is a quantity defined in terms of the metric tensor, its inverse and the Levi-Civita covariant derivative.
	\label{metric concomitant definition}
\end{definition}

	Some examples of metric concomitants that have already been introduced are the Riemann curvature of the Levi-Civita
	connection (\ref{riemann tensor definition}), as well as its metric contractions (Ricci tensor, (\ref{ricci tensor
	definition}), and Ricci scalar), the Weyl tensor (\ref{weyl tensor definition}) and Schouten tensor (\ref{schouten tensor
	definition}). Of course, the unphysical metric concomitants always admits its counterparts called physical metric
	concomitants denoted with tilde on top of the tensor signature.\\

\begin{definition}
	\emph{Conformally covariant metric concomitant:}
	A metric concomitant $T(g, \nabla)$ is said to be conformally covariant if under the transformation given by
	(\ref{conformal relation}) one has the property
	\begin{equation}
		\tilde{T}(\tilde{g}_{ab}, \tilde{\nabla}) = \Omega^{s}T(g_{ab}, \nabla), \quad s \in \mathbb{R},
		\label{conformally covariant metric concomitant relation}	
	\end{equation}
	where $s\in \mathbb{R}$
	is called the conformal weight of $T$.
	\label{conformally covariant metric concomitant}
\end{definition}
For example the Weyl tensor $C_{abc}{}^{d}{}$ is conformally covariant with weight zero
(aka conformally invariant) because $\tilde{C}_{abc}{}^{d}{}=C_{abc}{}^{d}{}$.
Further details about the previous definitions can be found in
\cite{juanbook,branson1989conformal}.

\section{Conformally covariant differentiation}\label{confcovdiff}

Our aim in this section is to introduce a notion of covariant differentiation which is compatible with the conformal structure. For this, we will generalize the results obtained in \cite{garciaparrado2024einstein}, which were valid only for $4$-dimensional spacetimes with invertible Weyl endomorphism.

\subsection{The fundamental lemma}

Let us start from the relation between the unphysical
and physical Schouten tensors, ($L_{ab}$ and $\tilde{L}_{ab}$ respectively)
which can be written in the form
\begin{equation}
	L_{ab}=\tilde{L}_{ab}-\Upsilon_{a}\Upsilon_{b}-\nabla_{a}\Upsilon_{b}+\frac{1}{2}g_{ab}\Upsilon_{c}\Upsilon^{c}.
	\label{eq:schouten-tensors-relation}
\end{equation}
Additional information of this formula can be found in \cite{juanbook}.
If we compute the unphysical covariant derivative of this expression we can write

\begin{equation}
	2\nabla_{[m}L_{a]b}=2\nabla_{[m}\tilde{L}_{a]b}-2\nabla_{[m}\Upsilon_{a]}\Upsilon_{b}-2\nabla_{[m}\nabla_{a]}\Upsilon_{b}.
\end{equation}
Knowing that $2\nabla_{[m}\nabla_{a]}\omega_{b}=\nabla_{m}\nabla_{a}\omega_{b}-\nabla_{a}\nabla_{m}\omega_{b}=R_{mab}{}^{d}{}\omega_{d}$, and the definition of the Weyl tensor (\ref{weyl tensor definition}), we can rewrite the previous equation as\\

\begin{equation}
	2 \tilde{L}_{b[p} g_{m]a} \, \Upsilon^b + 2 \tilde{L}_{a[m} \, \Upsilon_{p]} + \Upsilon^b C_{abmp} + 2 \nabla_{[p} \tilde{L}_{m]a} + 2 \nabla_{[m} L_{p]a} = 0.
\end{equation}

Now, we use (\ref{eq:transition-tensor}) to express $\nabla$ in terms of $\tilde{\nabla}$,
getting after some algebra

\begin{equation}
	\tilde{\nabla}_{[b} \tilde{L}_{e]a} = \frac{1}{2} \Upsilon^c C_{acbe} + \nabla_{[b} L_{e]a}.
	\label{integrability condition}
\end{equation}

\noindent
The previous
equation can be written in the form
\begin{equation}
	\Upsilon_c C^{ac}{}_{be}{}=2\left(\tilde{\nabla}_{[b}\tilde{L}_{e]}{}^{a}
-\nabla_{[b}L_{e]}{}^{a}\right).
	\label{eq:equation-upsilon}
\end{equation}
We are going to use this equation to isolate $\Upsilon_c$. For that, we regard
$C^{ac}{}_{be}$ as an field of endomorphisms in
End($\Omega^2(\mathcal{M})$)
($\Omega^2(\mathcal{M})$ is
set of 2-vector fields over the manifold $M$). Thus
\begin{equation}
C^{ac}{}_{be}\psi^{be}=\chi^{ac}.
\label{eq:weyl-endomorphism}
\end{equation}
The endomorphism $C^{ac}{}_{be}$ is known as the \emph{Weyl endomorphism}
A straightforward computation shows that the relation between the
physical Weyl endomorphism and the unphysical one is
\begin{equation}
 \tilde{C}^{ac}{}_{be}=\Omega^2C^{ac}{}_{be}{}.
 \label{eq:weyl-endomorphism-physical-to-unphys}
\end{equation}
The Weyl endomorphism
can be either invertible or not. If it is invertible we shall denote its inverse by
$W^{ac}{}_{be}$ and therefore we have the relation
\begin{equation}
C^{ac}{}_{be}W^{be}{}_{lh}=\delta^{a}{}_{[l}\delta^{c}{}_{h]}.
\label{eq:weyl-inverse}
\end{equation}
If it is not, we can define its \emph{Moore-Penrose pseudoinverse},
denoted by $(W^+)^{ac}{}_{be}$ which is unique (see Appendix~\ref{app:pseudo-inverse}).

Now we are ready to present a lemma,
which will be essential for the computations developed in this article.
\begin{lemma}\label{lemm:fund}
Let
\begin{eqnarray}
&&
\Lambda_q\equiv\frac{4}{1-D} \nabla_{[b}L_{e]p}W^{bep}{}_{q},\quad
 \tilde{\Lambda}_q\equiv\frac{4}{1-D} \tilde{\nabla}_{[b}\tilde{L}_{e]p}\tilde{g}^{pr}\tilde{W}^{be}{}_{rq},\label{eq:define-Lambda}\\
&&
	\Lambda^\xi_q\equiv\frac{4}{1-D} \nabla_{[b}L_{e]p}(W^+)_{q}{}^{bep}+
	\frac{2}{1-D}\left(\delta^{m}{}_{[p}\delta^{n}{}_{q]}-
	(W^+)_{pq}{}^{rs}C^{mn}{}_{rs}
	\right)\xi^p{}_{mn},\label{eq:define-Lambda-plus}\\
&&
	\tilde{\Lambda}^{\tilde\xi}_q\equiv\frac{4}{1-D} \tilde{\nabla}_{[b}\tilde{L}_{e]p}\tilde{g}^{pr}(\tilde{W}^+)_{qr}{}^{be} +
	\frac{2}{1-D}\left(\delta^{m}{}_{[p}\delta^{n}{}_{q]}-
	(\tilde{W}^+)_{pq}{}^{rs}\tilde{C}^{mn}{}_{rs}
	\right)\tilde{\xi}^p{}_{mn},\label{eq:define-Lambda-plus-tilde}
\end{eqnarray}
where $\xi^p{}_{[mn]}=\xi^p{}_{mn}$,
$\tilde{\xi}^p{}_{[mn]}=\tilde{\xi}^p{}_{mn}$ are some tensors and
$\tilde{\xi}^p{}_{mn}$ is arbitrary.
If the Weyl endomorphism is invertible then $\Upsilon_{a}$ defined in
\eqref{upsilon definition} fulfills the relation
\begin{equation}
	\Upsilon_a=\Lambda_a-\tilde{\Lambda}_a,
	\label{eq:upsilon-schouten}
\end{equation}
Otherwise the relation is

\begin{equation}
	\Upsilon_a=\Lambda^\xi_a-\tilde{\Lambda}^{\tilde\xi}_a.
	\label{eq:upsilon-schouten-plus}
\end{equation}
\label{lem:upsilon-a}
\end{lemma}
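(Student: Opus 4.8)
The plan is to start from the integrability identity already derived in the excerpt, \eqref{eq:equation-upsilon}, namely $\Upsilon_c C^{ac}{}_{be}=2(\tilde{\nabla}_{[b}\tilde{L}_{e]}{}^{a}-\nabla_{[b}L_{e]}{}^{a})$, and to invert the Weyl endomorphism in the slot carrying $\Upsilon$. The key algebraic fact, obtained by setting $l=a$ and tracing in the defining relation \eqref{eq:weyl-inverse}, is the contraction identity $C^{ac}{}_{be}W^{be}{}_{ah}=\tfrac{D-1}{2}\delta^{c}{}_{h}$ (using $\delta^a{}_a=D$). In the invertible case I would therefore contract both sides of \eqref{eq:equation-upsilon} with $W^{be}{}_{ah}$: the left-hand side collapses by this identity to $\tfrac{D-1}{2}\Upsilon_h$, yielding $\Upsilon_h=\tfrac{2}{D-1}\,(2\tilde{\nabla}_{[b}\tilde{L}_{e]}{}^{a}-2\nabla_{[b}L_{e]}{}^{a})W^{be}{}_{ah}$.

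The next step is to recognise the two resulting pieces. Writing $\nabla_{[b}L_{e]}{}^{a}W^{be}{}_{ah}=\nabla_{[b}L_{e]p}W^{bep}{}_{h}$, the unphysical piece $-\tfrac{4}{D-1}\nabla_{[b}L_{e]p}W^{bep}{}_{h}$ is exactly $\Lambda_h$ of \eqref{eq:define-Lambda}. For the physical piece I must convert the unphysical inverse $W$ and the unphysical raising of the index on $\tilde{L}_{e]}{}^{a}$ into their physical counterparts: from \eqref{eq:weyl-endomorphism-physical-to-unphys} together with \eqref{eq:weyl-inverse} one gets $\tilde{W}^{be}{}_{rh}=\Omega^{-2}W^{be}{}_{rh}$, while \eqref{inverse conformal relation} gives $\tilde{g}^{pr}=\Omega^{2}g^{pr}$; the two factors $\Omega^{\pm 2}$ cancel, so that $\tilde{\nabla}_{[b}\tilde{L}_{e]}{}^{a}W^{be}{}_{ah}=\tilde{\nabla}_{[b}\tilde{L}_{e]p}\tilde{g}^{pr}\tilde{W}^{be}{}_{rh}$, which is precisely $-\tilde{\Lambda}_h$ of \eqref{eq:define-Lambda}. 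Assembling the pieces yields $\Upsilon_a=\Lambda_a-\tilde{\Lambda}_a$, i.e. \eqref{eq:upsilon-schouten}.

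For the non-invertible case I would repeat the argument with the Moore--Penrose pseudoinverse $W^{+}$ of Appendix~\ref{app:pseudo-inverse} in place of $W$. Here the clean collapse fails, because $C^{ac}{}_{be}(W^{+})^{be}{}_{lh}$ is the range projector rather than $\delta^{a}{}_{[l}\delta^{c}{}_{h]}$, so contracting \eqref{eq:equation-upsilon} with $W^{+}$ recovers $\Upsilon$ only modulo the kernel of the endomorphism. Treating \eqref{eq:equation-upsilon} as a consistent linear system for $\Upsilon$ (consistency being automatic, since the genuine $\Upsilon$ of \eqref{upsilon definition} solves it), its general solution is the pseudoinverse particular solution plus an arbitrary kernel element. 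The particular solution produces the first terms of \eqref{eq:define-Lambda-plus} and \eqref{eq:define-Lambda-plus-tilde}, the physical one after the same $\Omega^{\pm 2}$ conversion using $\tilde{W}^{+}=\Omega^{-2}W^{+}$. Crucially, since $\tilde{C}^{ac}{}_{be}=\Omega^{2}C^{ac}{}_{be}$ is a positive multiple of $C$, the two endomorphisms share kernel and range and one has $\tilde{W}^{+}\tilde{C}=W^{+}C$; hence the \emph{same} projector $\mathrm{Id}-W^{+}C=\delta^{m}{}_{[p}\delta^{n}{}_{q]}-(W^{+})_{pq}{}^{rs}C^{mn}{}_{rs}$ governs both the $\xi$- and $\tilde{\xi}$-terms. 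Thus $\Lambda^{\xi}_q-\tilde{\Lambda}^{\tilde{\xi}}_q$ equals the particular solution plus $(\mathrm{Id}-W^{+}C)$ applied to $\tfrac{2}{1-D}(\xi-\tilde{\xi})$; choosing $\tilde{\xi}$ freely and then fixing $\xi$ so that this net kernel term reproduces the actual kernel component of $\Upsilon$ gives \eqref{eq:upsilon-schouten-plus}.

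The main obstacle is the non-invertible case. One must verify that the pseudoinverse of the full \emph{vector-valued} map $\Upsilon_c\mapsto\Upsilon_c C^{ac}{}_{be}$ is faithfully encoded by the pseudoinverse $W^{+}$ of the Weyl \emph{endomorphism}, and that the residual freedom is captured exactly by $\mathrm{Id}-W^{+}C$ — this requires the idempotency and self-adjointness of $W^{+}C$ from Appendix~\ref{app:pseudo-inverse}, so that $(\mathrm{Id}-W^{+}C)\xi$ ranges precisely over the kernel while the particular solution is orthogonal to it. The trace identity $C^{ac}{}_{be}W^{be}{}_{ah}=\tfrac{D-1}{2}\delta^{c}{}_{h}$ that trivialises this bookkeeping in the invertible case is no longer available, which is exactly why $\xi$ must be a determined ("some") tensor matching $\Upsilon$, whereas the physical ambiguity $\tilde{\xi}$ may be left arbitrary.
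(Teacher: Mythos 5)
Your invertible-case argument is correct and is essentially the paper's own proof: your traced identity $C^{ac}{}_{be}W^{be}{}_{ah}=\tfrac{D-1}{2}\delta^{c}{}_{h}$ is exactly what the paper obtains by multiplying \eqref{eq:equation-upsilon} by $W^{be}{}_{pq}$ and taking the $p$--$a$ trace, and your cancellation $\tilde{g}^{pr}\tilde{W}^{be}{}_{rq}=g^{pr}W^{be}{}_{rq}$ is \eqref{eq:weyl-physical-invertible} combined with \eqref{inverse conformal relation}.

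In the non-invertible case, however, there is a genuine gap --- precisely the one you flag at the end but then try to discharge by the wrong mechanism. You present the $W^{+}$-term of \eqref{eq:define-Lambda-plus} as ``the pseudoinverse particular solution'' of the system for the covector $\Upsilon_c$, and the $\xi$-term as ``an arbitrary kernel element'' of the vector-valued map $\Upsilon_c\mapsto\Upsilon_c C^{ac}{}_{be}$, to be justified through self-adjointness and orthogonality of $W^{+}C$. Both identifications fail in general: the image of $\xi^{p}{}_{mn}\mapsto\bigl(\delta^{m}{}_{[p}\delta^{n}{}_{q]}-(W^{+})_{pq}{}^{rs}C^{mn}{}_{rs}\bigr)\xi^{p}{}_{mn}$ is \emph{not} contained in the kernel of that vector-valued map, and the $W^{+}$-term alone need not solve \eqref{eq:equation-upsilon}. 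The plane-wave computation of subsection \ref{subsec:ppwave} shows this concretely: there the $W^{+}$-term of $\Lambda^{\xi}_a$ has vanishing $du$ and $dv$ components, while the map $\Upsilon_c\mapsto\Upsilon_cC^{ac}{}_{be}$ acts nontrivially on the $dv$-component (for type N one has $C_{abcv}=0$, so $\Upsilon_u$ drops out, but $C^{av}{}_{be}=-g^{ap}C_{pube}\neq 0$); the $dv$-part of the genuine $\Upsilon$ must therefore be supplied by the $\xi$-term (the $\tfrac{2}{3}\xi^{u}{}_{uv}\,dv$ piece), which consequently does not lie in the kernel of the vector-valued map, and the $W^{+}$-term by itself is not a solution. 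So ``particular solution plus arbitrary kernel element'' for the covector system, with $W^{+}$ standing in for the full map's pseudoinverse, is not a valid reading of \eqref{eq:define-Lambda-plus}.

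The correct route --- what the paper's ``after some manipulations'' hides --- avoids the pseudoinverse of the vector-valued map altogether. Set $Y_{pc}{}^{a}\equiv\delta^{a}{}_{[p}\Upsilon_{c]}$; by the antisymmetry of $C$ in its bivector slots, \eqref{eq:equation-upsilon} becomes, for each fixed value of $a$, a genuine bivector system $C_{be}{}^{pc}\,Y_{pc}{}^{a}=2\bigl(\tilde{\nabla}_{[b}\tilde{L}_{e]}{}^{a}-\nabla_{[b}L_{e]}{}^{a}\bigr)$, to which Theorem~\ref{theo:compatibility-condition} in the indexed form \eqref{eq:pseudo-inverse-indices} applies verbatim with $A=C$ and $A^{+}=W^{+}$. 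Since the genuine $Y$ built from \eqref{upsilon definition} solves it, one has $Y=W^{+}\chi+(I-W^{+}C)w$ for some $w$ (indeed $w=Y$ itself works), and the $a$--$p$ trace, using $\delta^{p}{}_{[p}\Upsilon_{c]}=\tfrac{D-1}{2}\Upsilon_{c}$, yields exactly \eqref{eq:upsilon-non-invertible} with a \emph{three-index} parameter $\psi^{p}{}_{mn}$ --- this is why the free datum in \eqref{eq:define-Lambda-plus} is a tensor $\xi^{p}{}_{mn}$ rather than a covector in a kernel, and why only the \emph{existence} of a suitable $\xi$ is asserted (``some tensors''), so that no orthogonality or idempotency bookkeeping is needed. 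From there your remaining steps --- $\tilde{W}^{+}=\Omega^{-2}W^{+}$, hence $\tilde{W}^{+}\tilde{C}=W^{+}C$, and the split $\psi^{p}{}_{mn}=\tilde{\xi}^{p}{}_{mn}-\xi^{p}{}_{mn}$ with $\tilde{\xi}$ arbitrary --- are correct and coincide with the paper's.
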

\begin{proof}
Let us start by the invertible Weyl endomorphism case.
First of all \eqref{eq:weyl-inverse} takes the following form
if we use the physical Weyl endomorphism instead of the unphysical
one
\begin{equation}
\tilde{C}^{ac}{}_{be}\tilde{W}^{be}{}_{lh}=\delta^{a}{}_{[l}\delta^{c}{}_{h]}.
\label{eq:weyl-inverse-physical}
\end{equation}
Since $\tilde{C}^{ac}{}_{be}=\Omega^2C^{ac}{}_{be}{}$ \eqref{eq:weyl-inverse-physical}
entails
\begin{equation}
\Omega^2\tilde{W}^{pq}{}_{cd}=W^{pq}{}_{cd}.
\label{eq:weyl-physical-invertible}
\end{equation}
Next we take \eqref{eq:equation-upsilon} and multiply
both sides by $W^{be}{}_{pq}$, getting
\begin{equation}
	\Upsilon_{c}\!
	\left(\delta_{p}{}^{a}\,\delta^{c}{}_{q}-
	\delta^{c}{}_{p}\,\delta^{a}{}_{q}\right)
	=4\left(\tilde{\nabla}_{[b}\tilde{L}_{e]}{}^{a}
	-\nabla_{[b}L_{e]}{}^{a}\right) W^{be}{}_{pq}.
\end{equation}
Taking the $p-a$ trace we can easily isolate $\Upsilon_q$
\begin{equation}
	\Upsilon_{q}=\frac{4}{D-1}
	\left(\tilde{\nabla}_{[b}\tilde{L}_{e]p}-\nabla_{[b}L_{e]p}\right)
	W^{bep}{}_{q}.
	\label{eq:upsilon-invertible}
\end{equation}
Also we can write \eqref{eq:upsilon-invertible} as
\begin{equation}
\Upsilon_{q}=\frac{4}{D-1}
\tilde{\nabla}_{[b}\tilde{L}_{e]p}W^{bep}{}_{q}
-\frac{4}{D-1}\nabla_{[b}L_{e]p}W^{bep}{}_{q}.
\label{eq:upsilon-invertible-expanded}
\end{equation}
\eqref{eq:upsilon-schouten} follows by using
\eqref{eq:weyl-physical-invertible}
in \eqref{eq:upsilon-invertible-expanded}.
Assume now that the Weyl endomorphism is not invertible. Then,
using the relation $\tilde{C}^{ac}{}_{be}=\Omega^2C^{ac}{}_{be}{}$
and the property $(\lambda A)^+=A^+ / \lambda$, $\lambda\in\mathbb{R}$
of the pseudo-inverse (see Appendix~\ref{app:pseudo-inverse}) one easily concludes that

\begin{equation}
 \Omega^2(\tilde{W}^+)_{cd}{}^{pq}=(W^+)_{cd}{}^{pq}.
\label{eq:weyl-physical-pseudo-inverse}
\end{equation}

Next we use the formula \eqref{eq:pseudo-inverse-indices}
to solve \eqref{eq:equation-upsilon} taking $\Upsilon_q$ as the unkown.
After some manipulations we get

\begin{equation}
	\Upsilon_{q}=\frac{4}{D-1}
	\left(\tilde{\nabla}_{[b}\tilde{L}_{e]p}-\nabla_{[b}L_{e]p}\right)
	(W^+)_{q}{}^{bep}+
	\frac{2}{D-1}\left(\delta^{m}{}_{[p}\delta^{n}{}_{q]}-
	(W^+)_{pq}{}^{rs}C^{mn}{}_{rs}
	\right)\psi^p{}_{mn},
	\label{eq:upsilon-non-invertible}
\end{equation}
where $\psi^p{}_{[mn]}=\psi^p{}_{mn}$ is some tensor.
If we set $\psi^p{}_{mn}=\tilde{\xi}^p{}_{mn}-\xi^p{}_{mn}$ with
$\tilde{\xi}^p{}_{mn}$ chosen arbitrarily
then we can write \eqref{eq:upsilon-non-invertible} as
\begin{eqnarray}
&&
	\Upsilon_{q}=\frac{4}{D-1}
	\tilde{\nabla}_{[b}\tilde{L}_{e]p}(W^+)_{q}{}^{bep}-
	\frac{4}{D-1}\nabla_{[b}L_{e]p}
	(W^+)_{q}{}^{bep}+\nonumber\\
&&
	\frac{2}{D-1}\left(\delta^{m}{}_{[p}\delta^{n}{}_{q]}-
	(\tilde{W}^+)_{pq}{}^{rs}\tilde{C}^{mn}{}_{rs}\right)\tilde{\xi}^p{}_{mn}- \nonumber\\
&& 
	\frac{2}{D-1}\left(\delta^{m}{}_{[p}\delta^{n}{}_{q]}-
	(W^+)_{pq}{}^{rs}C^{mn}{}_{rs}\right)\xi^p{}_{mn},
	\label{eq:upsilon-non-invertible-expanded}
\end{eqnarray}
where we also used
$(\tilde{W}^+)_{pq}{}^{rs}\tilde{C}^{mn}{}_{rs}=(W^+)_{pq}{}^{rs}C^{mn}{}_{rs}$
which is a consequence of \eqref{eq:weyl-physical-pseudo-inverse} and
\eqref{eq:weyl-endomorphism-physical-to-unphys}.
Finally by
defining $\Lambda^{\xi}_{q}$ and $\tilde{\Lambda}^{\xi}_{q}$ as done in \eqref{eq:define-Lambda-plus} and
\eqref{eq:define-Lambda-plus-tilde} we obtain \eqref{eq:upsilon-schouten-plus}.

\end{proof}

\begin{remark}
If the Weyl endomorphism is not invertible then
the compatibility condition of
Theorem \ref{theo:compatibility-condition} applied to
\eqref{eq:equation-upsilon}
reads

\begin{equation}
C_{be}{}^{qp}\,(W^{+})_{qp}{}^{rs}
\left(\tilde{\nabla}_{[r} \tilde{L}_{s]a}-
\nabla_{[r} L_{s]a}\right)
=  \tilde{\nabla}_{[b} \tilde{L}_{e]a}
- \nabla_{[b} L_{e]a}.
\label{eq:compatibility-condition}
\end{equation}
This is a necessary condition for
\eqref{eq:equation-upsilon} to have solutions.
Therefore if the Weyl endomorphism is not invertible
then eq. \eqref{eq:compatibility-condition} has to be appended
to \eqref{eq:upsilon-schouten-plus} as a subsidiary condition.
\end{remark}

\subsection{Conformal differentiation of conformally covariant
metric concomitants}
\label{subsec:conformal-diff}
With Lemma \ref{lem:upsilon-a} established, we are in a position to introduce a new differential operator compatible with conformally covariant metric concomitants; that is, an operator that transforms conformally covariant concomitants into conformally covariant concomitants. First of all, let us
consider the case of a conformally covariant scalar function $\tilde{u} = \Omega^s u$, with $
u \in C^\infty(\mathcal{M})$ and $s\in \mathbb{R}$.

Taking the physical
covariant derivative $\tilde{\nabla} $ of $\tilde{u}$ and replacing it
by its unphysical counterpart $\nabla$ we get
\begin{equation}
	\tilde{\nabla}_a \tilde{u} = s \Omega^{s-1} u \nabla_a \Omega +
	\Omega^s \nabla_a u=\Omega^s\left( su\Upsilon_{a}+\nabla_{a}u
	\right),
	\label{eq:nabla-u}
\end{equation}
where we used the definition of $\Upsilon_{a}$ given by (\ref{upsilon
definition}). As we can see, due to the use of the Leibnitz rule, this
equation shows that the Levi-Civita connection is not conformally
covariant because of the presence of the term $su\Upsilon_a$.
However, taking \eqref{eq:nabla-u} as the starting point, we
can construct other operators which are conformally
covariant.

\begin{theorem}
	Assume that the Weyl endomorphism is invertible and $u$ is a conformally
	covariant scalar with weight $s$. Then
	the quantity $D_a^s u$ with
	$ D_a^s : C^\infty(\mathcal{M}) \to
	C^\infty(\mathcal{M})$, defined by
	\begin{equation}
	D_a^s u \equiv \nabla_a u + s\Lambda_a u,
	\end{equation}
	is conformally covariant of weight $s$ when acting on scalars.
	\label{theo:operator-Da-theorem}
\end{theorem}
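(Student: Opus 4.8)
The plan is to compute the physical operator $\tilde{D}_a^s\tilde{u}=\tilde{\nabla}_a\tilde{u}+s\tilde{\Lambda}_a\tilde{u}$ directly and show it reduces to $\Omega^s D_a^s u$. Only two inputs are needed: the transformation of the Levi-Civita derivative of a conformally covariant scalar, recorded in \eqref{eq:nabla-u}, and the expression $\Upsilon_a=\Lambda_a-\tilde{\Lambda}_a$ from Lemma~\ref{lem:upsilon-a} in the invertible case. The point to keep in mind is that $\Lambda_a$ is itself a metric concomitant built entirely from unphysical data (the unphysical $\nabla$, $L$, and the inverse Weyl endomorphism $W$), so its physical counterpart is precisely $\tilde{\Lambda}_a$, and it is this quantity that appears in the physical operator $\tilde{D}_a^s$.

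First I would substitute $\tilde{u}=\Omega^s u$ into the zeroth-order term, giving $s\tilde{\Lambda}_a\tilde{u}=s\Omega^s u\,\tilde{\Lambda}_a$. Next I would replace $\tilde{\nabla}_a\tilde{u}$ by the right-hand side of \eqref{eq:nabla-u}, namely $\Omega^s(su\Upsilon_a+\nabla_a u)$. Collecting both contributions yields
\begin{equation}
\tilde{D}_a^s\tilde{u}=\Omega^s\bigl(su\Upsilon_a+\nabla_a u\bigr)+s\Omega^s u\,\tilde{\Lambda}_a .
\end{equation}

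The decisive step is to insert $\Upsilon_a=\Lambda_a-\tilde{\Lambda}_a$: the piece $-su\tilde{\Lambda}_a$ coming from $\Upsilon_a$ cancels exactly against the correction term $+su\tilde{\Lambda}_a$, leaving $\Omega^s(\nabla_a u+s\Lambda_a u)=\Omega^s D_a^s u$, which is the assertion. In this sense the operator is engineered so that the correction $s\Lambda_a u$ absorbs the anomalous obstruction $su\Upsilon_a$ to the covariance of $\nabla_a$. There is essentially no analytic difficulty here; the entire content is already packaged in Lemma~\ref{lem:upsilon-a}, and the only thing requiring care is the bookkeeping — keeping the physical $\tilde{\Lambda}_a$ distinct from the unphysical $\Lambda_a$ and confirming that it is exactly $\tilde{\Lambda}_a$ (not $\Lambda_a$) that enters $\tilde{D}_a^s$, since otherwise the cancellation would fail.
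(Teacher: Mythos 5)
Your proposal is correct and follows essentially the same route as the paper's proof: both rest on equation \eqref{eq:nabla-u} together with the identity $\Upsilon_a=\Lambda_a-\tilde{\Lambda}_a$ from Lemma~\ref{lem:upsilon-a}, and differ only in bookkeeping (you expand $\tilde{D}_a^s\tilde{u}$ directly, while the paper rearranges \eqref{eq:nabla-u} into the equality $\tilde{D}_a^s\tilde{u}=\Omega^s D_a^s u$). Your added remark that the physical operator must carry $\tilde{\Lambda}_a$ because $\Lambda_a$ is an unphysical metric concomitant is exactly the point implicit in the paper's definition of $\tilde{D}_a^s$.
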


\begin{proof}
Using Lemma \ref{lem:upsilon-a} in \eqref{eq:nabla-u} we easily get
	\[
	\quad \tilde{\nabla}_a \tilde{u} = s \Omega^s (\Lambda_a-\tilde{\Lambda}_a) u + \Omega^s \nabla_a u \Leftrightarrow
	\]
	\[
	\quad \tilde{\nabla}_a \tilde{u} + s \Omega^s \widetilde{\Lambda}_a u = \Omega^s (\nabla_a u + s \Lambda_a u) \Leftrightarrow 
	\]
	\[
	\tilde{D}_a^s \tilde{u}=\Omega^s D_a^s u.
	\]
\end{proof}

The operator \(D_a^s: C^\infty(\mathcal{M}) \to C^\infty(\mathcal{M})\)
is linear and it fulfills a \emph{generalized Leibniz rule}
	\begin{equation}\label{eq:generalized-leibnitz}
		D_a^s(w_1 w_2)=(D_a^{s_1}w_1)\,w_2 + w_1\,(D_a^{s_2}w_2),\qquad s=s_1+s_2,\quad \forall w_1,w_2\in C^\infty(\mathcal{M}).
		\tag{23}
	\end{equation}
(see \cite{garciaparrado2024einstein}).

\begin{remark}
 If the Weyl endomorphism is not invertible then
 one can replace $\tilde{\Lambda}_a$ and $\Lambda_a$ by
 $\tilde{\Lambda}^{\tilde\xi}_a$ and $\Lambda^\xi_a$ respectively and carry out a similar proof.
 In this case the corresponding operator is defined by
 \begin{equation}
  (D^\xi)_a^s u \equiv \nabla_a u + s\Lambda^\xi_a u.
 \end{equation}
All the statements about $D^s_a$ can be formulated for $(D^\xi)^s_a$ in a
similar fashion.
\label{rem:degenerate-weyl}
\end{remark}

Theorem \ref{theo:operator-Da-theorem} can be generalized to tensors as follows.
\begin{theorem}
Assume that the Weyl endomorphism is invertible, and define the conformal pseudo-differential operator $D_a^{s,(p,q)} : \mathcal{T}^p_q(\mathcal{M}) \rightarrow \mathcal{T}^{p+1}_q(\mathcal{M}) $, $p,q\in \mathbb{N}$,
$(p, q) \neq (0, 0)$, given by
	\begin{eqnarray}
	&&
	D_a^{s,(p,q)} K^{a_1 \ldots a_p}{}_{b_1 \ldots b_q} \equiv \nabla_a K^{a_1 \ldots a_p}{}_{b_1 \ldots b_q} +\nonumber\\
	&&
	\sum_{j=1}^{p} \overline{M}^{a_j}{}_{ac}(\Lambda, \mathbf{g}, p, s) K^{a_1 \ldots a_{j-1} c a_{j+1} \ldots a_p}{}_{b_1 \ldots b_q} + \nonumber\\
	&&
	\sum_{j=1}^{q} M^{c}{}_{ab_j}(\Lambda, \mathbf{g}, q, s) K^{a_1 \ldots a_p}{}_{b_1 \ldots b_{j-1} c b_{j+1} \ldots b_q},
		\label{eq:conf-diff}
	\end{eqnarray}
	 where
	\begin{eqnarray}
	&&
	M^c{}_{ba}(\Lambda, \mathbf{g}, q, s) \equiv \left( \frac{s + q}{q} \right) \Lambda_b \delta^c{}_a +
	\Lambda_a \delta^c{}_b - g_{ab} g^{cd} \Lambda_d, \label{eq:definition-M}\\
	&&
	\overline{M}^c{}_{ba}(\Lambda, \mathbf{g}, p, s) \equiv \left( \frac{s - p}{p} \right) \Lambda_b \delta^c{}_a - \Lambda_a \delta^c{}_b + g_{ab} g^{cd} \Lambda_d.
	\label{eq:definition-M-bar}
	\end{eqnarray}
If the tensor $K^{a_1 \ldots a_p}{}_{b_1 \ldots b_q}$, is an unphysical metric concomitant conformally covariant of weight $s \in \mathbb{R}$, 
\begin{equation}
\tilde{K}^{a_1 \ldots a_p}{}_{b_1 \ldots b_q} = \Omega^s K^{a_1 \ldots a_p}{}_{b_1 \ldots b_q},
\label{eq:conformally-covariant-tensor}
\end{equation}
then $D_a^{s,(p,q)} K^{a_1 \ldots a_p}{}_{b_1 \ldots b_q}$
is also conformally covariant with the same weight
\begin{equation}
\widetilde{D}_a^{s,(p,q)} \widetilde{K}^{a_1 \ldots a_p}{}_{b_1 \ldots b_q} =
\Omega^s D_a^{s,(p,q)} K^{a_1 \ldots a_p}{}_{b_1 \ldots b_q}.
\end{equation}
\label{th:pseudo-differential-operator}

\end{theorem}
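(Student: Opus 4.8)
The plan is to compute $\widetilde{D}_a^{s,(p,q)}\widetilde{K}$ directly, substitute $\widetilde{K}=\Omega^s K$ throughout, rewrite every physical object ($\tilde\nabla$, $\tilde\Lambda$, $\tilde g$) in terms of its unphysical counterpart, and verify that the result collapses to $\Omega^s D_a^{s,(p,q)}K$. The engine is identical to that of Theorem~\ref{theo:operator-Da-theorem}: the non-tensorial anomaly generated by $\tilde\nabla$ must be absorbed exactly by the $\Upsilon$-part of the correction tensors $M$ and $\overline{M}$, which is made available through Lemma~\ref{lem:upsilon-a}.

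First I would expand $\tilde\nabla_a\widetilde{K}$. The Leibniz rule applied to $\Omega^s K$ produces an $s\,\Omega^s\Upsilon_a K$ term (using $\nabla_a\Omega=\Omega\Upsilon_a$) together with $\Omega^s\tilde\nabla_a K$; the latter I convert to $\nabla_a K$ via the transition tensor \eqref{eq:transition-tensor}, which contributes one algebraic correction per index of $K$, entering with opposite signs on contravariant and covariant slots. Next I treat the $M$ and $\overline{M}$ terms of $\widetilde{D}$. Two simplifications occur here: the only metric factor that appears is the combination $\tilde g_{ab}\tilde g^{cd}$, which equals $g_{ab}g^{cd}$ by \eqref{inverse conformal relation} and is therefore conformally invariant; and Lemma~\ref{lem:upsilon-a} supplies $\tilde\Lambda_a=\Lambda_a-\Upsilon_a$. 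Since $M$ and $\overline{M}$ are affine in their $\Lambda$-argument, each splits as $M(\tilde\Lambda,\tilde{\mathbf g})=M(\Lambda,\mathbf g)-M(\Upsilon,\mathbf g)$, and likewise for $\overline{M}$, after the common $\Omega^s$ is factored out of $\widetilde{K}$.

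Collecting everything, the $\Lambda$-pieces recombine precisely into $\Omega^s D_a^{s,(p,q)}K$ by the definition \eqref{eq:conf-diff}, so the theorem reduces to a single purely algebraic identity that is linear in $\Upsilon$, namely
\[
s\,\Upsilon_a K^{a_1\ldots a_p}{}_{b_1\ldots b_q}+(\text{transition terms})=\sum_{j=1}^{p}\overline{M}^{a_j}{}_{ac}(\Upsilon,\mathbf g,p,s)\,K^{a_1\ldots c\ldots a_p}{}_{b_1\ldots b_q}+\sum_{j=1}^{q}M^{c}{}_{ab_j}(\Upsilon,\mathbf g,q,s)\,K^{a_1\ldots a_p}{}_{b_1\ldots c\ldots b_q}.
\]
Because both sides act one index at a time, I would verify this slot by slot, treating a single contravariant index and a single covariant index separately. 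For each slot the identity organizes into a ``diagonal'' part proportional to $\Upsilon_a K$ and ``off-diagonal'' parts (a term carrying the derivative index $a$ into the slot, a $\Upsilon^{a_j}$ or $\Upsilon_{b_j}$ term, and a trace term built from $g_{ab}\Upsilon^{c}$). The off-diagonal and trace terms cancel in matching pairs between the transition tensor and $M$/$\overline{M}$, while the diagonal coefficients are balanced by the normalizations $(s-p)/p$ and $(s+q)/q$ in \eqref{eq:definition-M}–\eqref{eq:definition-M-bar} together with the weight term $s\,\Upsilon_a K$.

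The main obstacle is precisely this final bookkeeping step: tracking which $\Upsilon$-monomial each $\delta$-, $g$-, and $\Upsilon^{\cdot}$-term produces once the dummy index $c$ is contracted against the appropriate slot of $K$, and confirming that the weight $s$ is apportioned among the $p+q$ slots so that every diagonal coefficient vanishes. This is where the exact shape of the coefficients is forced, and where the single-index warm-ups (the cases $(p,q)=(1,0)$ and $(0,1)$, which reproduce Theorem~\ref{theo:operator-Da-theorem}) serve as the safest guide; particular care is needed for mixed tensors, where the diagonal contributions of the contravariant and covariant slots must be reconciled simultaneously. Once the identity is checked on one upper and one lower slot, linearity and the slotwise symmetry of the construction extend it to arbitrary $(p,q)$, completing the proof.
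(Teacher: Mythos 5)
Your route is the same as the paper's own proof: apply the Leibniz rule to $\tilde K=\Omega^s K$, convert $\tilde\nabla$ into $\nabla$ through the transition tensor \eqref{eq:transition-tensor}, substitute $\tilde\Lambda_a=\Lambda_a-\Upsilon_a$ from Lemma \ref{lem:upsilon-a}, exploit the linearity of $M$ and $\overline M$ in the $\Lambda$-argument together with the conformal invariance of the combination $g_{ab}g^{cd}$, and reduce the theorem to an algebraic identity linear in $\Upsilon$. Your off-diagonal cancellations and the warm-up cases $(p,q)=(1,0)$ and $(0,1)$ are correct and reproduce the paper's mechanism exactly.

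The gap is the closing step, ``linearity and the slotwise symmetry of the construction extend it to arbitrary $(p,q)$.'' The diagonal sector does \emph{not} close slot by slot: the Leibniz anomaly $s\,\Upsilon_a K$ arises once for the whole tensor, whereas each of the normalizations $\tfrac{s+q}{q}$ and $\tfrac{s-p}{p}$ distributes the \emph{entire} weight $s$ over its own family of slots. Collecting the coefficient of the diagonal monomial $\Upsilon_a K^{a_1\ldots a_p}{}_{b_1\ldots b_q}$ in your reduced identity, the side produced by Leibniz plus transition tensor gives $s-p+q$, while the side produced by $\overline M$ and $M$ evaluated at $\Upsilon$ gives
\begin{equation*}
p\cdot\frac{s-p}{p}\;+\;q\cdot\frac{s+q}{q}\;=\;(s-p)+(s+q)\;=\;2s-p+q,
\end{equation*}
so the balance holds precisely when $s=0$, or when $p=0$ or $q=0$ (in which case one of the two sums is absent and the count closes globally, though still not slot by slot, since the Leibniz $s$ is shared among the $q$, respectively $p$, slots). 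For genuinely mixed tensors with $s\neq 0$ the identity you must verify is false with the coefficients \eqref{eq:definition-M}--\eqref{eq:definition-M-bar} as written, so the proposed extension cannot be completed; a repair would apportion the weight, e.g.\ $s=s_1+s_2$ with $s_1$ entering $\overline M$ and $s_2$ entering $M$. You flagged the mixed case as delicate but did not carry out this count, and it is the one place where the argument genuinely breaks. For what it is worth, the paper's own proof is a sketch that displays the final identity without performing this bookkeeping either, and the use made of the theorem later (the $\mathcal C$-connection) is the case $s=0$, where the discrepancy vanishes.
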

\begin{proof}
	The result can be obtained following a procedure similar to the proof of Theorem 2 in \cite{garciaparrado2024einstein}. For the sake of completeness we review here the main steps.
	
	Consider a metric concomitant that is a conformally covariant tensor of rank $(p,q)$ and conformal
	weight $s$ as shown in \eqref{eq:conformally-covariant-tensor} and take the physical covariant
	derivative of that equation
	\begin{equation}
		\tilde{\nabla}_c \tilde{K}^{a_1 \ldots a_p}{}_{b_1 \ldots b_q}
		= s \Omega^{s-1} \tilde{\nabla}_c \Omega \, K^{a_1 \ldots a_p}{}_{b_1 \ldots b_q}
		+ \Omega^{s} \left(\tilde{\nabla}_c K^{a_1 \ldots a_p}{}_{b_1 \ldots b_q}\right).
	\end{equation}
	Next, use the transition tensor to write the physical derivative in the right hand side
	in terms of the unphysical one getting
	\begin{eqnarray}
	&&
	\tilde{\nabla}_c \tilde{K}^{a_1 \ldots a_p}{}_{b_1 \ldots b_q} =
		s \Omega^{s-1} \nabla_c \Omega \, K^{a_1 \ldots a_p}{}_{b_1 \ldots b_q}+ \Omega^{s} \nabla_c K^{a_1 \ldots a_p}{}_{b_1 \ldots b_q} - \Omega^{s}\sum_{j=1}^{p} \Gamma[\nabla, \tilde{\nabla}]^{q_j}{}_{c b_j} K^{a_1 \ldots a_p}{}_{b_1 \ldots q_{j}\ldots b_q}\nonumber\\
	&&
		+ \Omega^{s}\sum_{j=1}^{q} \Gamma[\nabla, \tilde{\nabla}]^{a_j}{}_{c p_j} K^{a_1 \ldots p_{j}\ldots a_p}{}_{b_1 \ldots b_q}.
	\end{eqnarray}
	
	We can use \eqref{eq:transition-tensor} in combination with Lemma \ref{lem:upsilon-a}
	to write the previous equation in terms of
	$M^{q_j}{}_{c b_j}$ and $\bar{M}^{q_j}{}_{c b_j}$
	as defined by \eqref{eq:definition-M} and \eqref{eq:definition-M-bar} yielding
	\begin{eqnarray}
	&&
	\tilde{\nabla}_c \tilde{K}^{a_1 \ldots a_p}{}_{b_1 \ldots b_q}+ \sum_{j=1}^{p} \Omega^{s} \overline{M}^{a_j}{}_{cq_j}
	(\tilde{\Lambda}, \tilde{\mathbf{g}}, p, s) K^{a_1 \ldots q_j \ldots a_p}{}_{b_1 \ldots b_q}+\sum_{j=1}^{q} \Omega^{s}
	M^{p_j}{}_{cb_j}(\tilde{\Lambda}, \tilde{\mathbf{g}}, q, s) K^{a_1 \ldots a_p}{}_{b_1 \ldots p_j \ldots b_q} = \nonumber\\
	&&
		\Omega^{s}\left(\nabla_c K^{a_1 \ldots a_p}{}_{b_1 \ldots b_q} +\sum_{j=1}^{p} \overline{M}^{a_j}{}_{cq_j}(\Lambda, \mathbf{g}, p, s) K^{a_1 \ldots q_j \ldots a_p}{}_{b_1 \ldots b_q}+\sum_{j=1}^{q} M^{p_j}{}_{cb_j}(\Lambda, \mathbf{g}, q, s) K^{a_1 \ldots a_p}{}_{b_1 \ldots p_j \ldots b_q}\right).\nonumber\\
	&&
	\end{eqnarray}
	
\end{proof}

\begin{remark}
 If the Weyl endomorphism is not invertible then we can proceed in a fashion similar as in
 Remark \ref{rem:degenerate-weyl} and define the operator $(D^\xi)^{s,(p,q)}_a$
 using $\Lambda^\xi_a$.
 \label{rem:weyl-degenerate}
\end{remark}

\section{The $\mathcal{C}$ and $\mathcal{C}^\xi$-connections}\label{cconnection}
If $s=0$ (conformal invariance) then $D^{0,(p,q)}_a$ is actually independent from
$p,q$, as it follows from equations
(\ref{eq:definition-M})-(\ref{eq:definition-M-bar}).
Using \eqref{eq:conf-diff} it is clear that $D^{0,(p,q)}_a$ is a covariant derivative that
defines a new connection called the $\mathcal{C}$-conection \cite{garciaparrado2024einstein}.
\begin{equation}
	\mathcal{C}_a \equiv D^{0,(p,q)}_a.
	\label{eq:c-connection}
\end{equation}

The main properties of the $\mathcal{C}$-connection were studied in
\cite{garciaparrado2024einstein} for the particular case of $D=4$ and an
invertible \emph{Weyl square endomorphism} $C_{ab}$ which is defined as
$C_{ab}\equiv C_{apqr}C_{b}{}^{pqr}{}$. Again if $D=4$ one has the
relation $C_{ab}=(C\cdot C)g_{ab} / 4$, where
$C\cdot C= C_{abcd}C^{abcd}$. The analysis performed
in \cite{garciaparrado2024einstein} generalizes straightforwardly to the
present situation if the Weyl endomorphism is invertible and thus
only the main results will be presented in that case.
The most important properties of the $\mathcal{C}$-connection are that
it is conformally invariant $\tilde{\mathcal{C}}_a = \mathcal{C}_a$
(Theorem 3 of \cite{garciaparrado2024einstein}) and that it is torsionless
as it is deduced from the relation
\begin{equation}
	\Gamma[\mathcal{C}, \nabla]^{c}{}_{ab} = g^{cd} g_{ab} \Lambda_d - \delta_{b}{}^{c}{} \Lambda_a - \delta_{a}{}^{c}{} \Lambda_b,
	\label{eq:connection-C-nabla}
\end{equation}
which in turn is obtained from \eqref{eq:definition-M-bar} by setting $s=0$.
Also, another important property arising from the conformal invariance of the
$\mathcal{C}$-connection is the conformal invariance of
its curvature, $\tilde{\mathcal{R}}_{abc}{}^d=\mathcal{R}_{abc}{}^d$
(see Remark 1 in \cite{garciaparrado2024einstein}).

\begin{proposition}
	The $\mathcal{C}$-connection is a Weyl connection.
	\label{prop:weyl-connection}
\end{proposition}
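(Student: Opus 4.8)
The plan is to verify the two defining properties of a Weyl connection for $\mathcal{C}$: that it is torsion-free, and that its non-metricity is \emph{pure trace}, i.e.\ that $\mathcal{C}_a g_{bc} = \omega_a\,g_{bc}$ for some $1$-form $\omega_a$. Recall that a torsion-free affine connection on a conformal manifold is a Weyl connection precisely when it preserves the conformal class, and the latter is equivalent to the covariant derivative of a representative metric being proportional to that same metric. The torsion-free property is already at hand: it follows from the symmetry of the transition tensor recorded in \eqref{eq:connection-C-nabla} (equivalently, from the fact noted above that $\mathcal{C}$ is torsionless). Hence the whole proof reduces to computing $\mathcal{C}_a g_{bc}$ and exhibiting the proportionality.

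For the computation I would apply the connection $\mathcal{C}_a = D^{0,(p,q)}_a$ to the metric $g_{bc}$, which is a tensor of type $(0,2)$, so that the relevant instance of \eqref{eq:conf-diff} has $p=0$, $q=2$ and $s=0$. Since the metric carries only lower indices, only the two $M$-terms contribute, the $\overline{M}$-sum being empty. The leading term $\nabla_a g_{bc}$ vanishes by metric compatibility of the Levi-Civita connection. Substituting \eqref{eq:definition-M} with $s=0$ and $q=2$ (so that the coefficient $\tfrac{s+q}{q}=1$) into the two remaining terms and contracting with the metric, the off-diagonal contributions cancel in pairs, leaving
\[
\mathcal{C}_a g_{bc} = 2\,\Lambda_a\, g_{bc}.
\]

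This is exactly the pure-trace form required: together with torsion-freeness it shows that $\mathcal{C}$ preserves the conformal structure, and therefore that it is a Weyl connection, with Weyl $1$-form $2\Lambda_a$ (up to the usual sign convention). As a consistency check one may repeat the computation in the physical frame to obtain $\mathcal{C}_a \tilde{g}_{bc} = 2\tilde{\Lambda}_a\,\tilde{g}_{bc}$; expanding $\mathcal{C}_a(\Omega^{-2} g_{bc})$ via the Leibniz rule and comparing the two expressions reproduces precisely the relation $\Upsilon_a = \Lambda_a - \tilde{\Lambda}_a$ of Lemma \ref{lem:upsilon-a} (eq.\ \eqref{eq:upsilon-schouten}), which confirms that the Weyl $1$-form transforms correctly across the conformal class. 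The calculation itself is routine; the only points demanding care are the index bookkeeping in \eqref{eq:definition-M}—matching the derivative slot to the correct slot of $M^c{}_{ba}$—and phrasing the Weyl-connection criterion so that the conformal (pure-trace) character of the non-metricity is manifestly the property to be checked.
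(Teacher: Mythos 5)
Your proof is correct and follows essentially the same route as the paper: both reduce the claim to computing $\mathcal{C}_a g_{bc}$, use $\nabla_a g_{bc}=0$, and obtain the pure-trace result $\mathcal{C}_a g_{bc}=2\Lambda_a g_{bc}$ (your substitution of $M^c{}_{ba}$ from \eqref{eq:definition-M} at $s=0$, $q=2$ is the same computation as the paper's use of the transition tensor \eqref{eq:connection-C-nabla}, since that transition tensor is just $-M$ at $s=0$). Your explicit remarks on torsion-freeness and the physical-frame consistency check are harmless additions beyond what the paper records.
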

\begin{proof}
	If we compute the $\mathcal{C}$ covariant derivative of the unphysical
	metric tensor we get
	\begin{equation}
		\mathcal{C}_{a}g_{bc}=\nabla_{a}g_{bc}
		-\Gamma[\mathcal{C},\nabla]^{d}{}_{ab}g_{dc}
		-\Gamma[\mathcal{C}, \nabla]^{d}{}_{ac}g_{bd}.
	\end{equation}
	Using \eqref{eq:connection-C-nabla} in the previous equation
	and the unphysical Levi-Civita condition $\nabla_a g_{bc}=0$ we get
	\begin{equation}
		\mathcal{C}_{a}g_{bc}=2\Lambda_{a}g_{bc},
	\end{equation}
	which is the definition of a Weyl connection.
\end{proof}

If the Weyl endomorphism is not invertible then
we may define a $\mathcal{C}^\xi$-connection by
\begin{equation}
	\mathcal{C}^\xi_a \equiv (D^\xi)^{0,(p,q)}_a.
	\label{eq:c-connection-plus}
\end{equation}
Clearly, all the properties of the $\mathcal{C}$-connection described above
hold \emph{mutatis mutandis} for the $\mathcal{C}^\xi$ connection.

\section{Characterization of conformal Einstein spaces}\label{charEinstein}


In this section, we will make a characterization of conformal $D$-dimensional Einstein spaces, which are the
ones in which $\tilde{\Lambda}_d=0$, or in other words, $\Upsilon_{d}=\Lambda_d$ (Lemma \ref{lem:upsilon-a})
and $\tilde{L}_{ab}=\lambda\tilde{g}_{ab}$.
\begin{theorem}\label{thm:main1}
	Let $(\mathcal{M}, g)$ be a $D$-dimensional pseudo-Riemannian manifold and assume that
	the Weyl endomorphism of its Levi-Civita connection is invertible.
	Then $(\tilde{\mathcal{M}}, \tilde{g}_{ab})$ is an Einstein manifold
	if and only if
	\begin{equation}
		\mathcal{R}_{[ab]} = 0, \quad \mathcal{R}_{(ab)} - \frac{1}{D} g_{ab} \mathcal{R} = 0,
		\label{eq:einstein-manifold}
	\end{equation}
	where $\mathcal{R}_{ab}$ is the Ricci tensor of the $\mathcal{C}_a$ covariant derivative.
	\label{theo:charact-einstein-spaces}
\end{theorem}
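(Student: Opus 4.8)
The plan is to lean on the two structural facts about the $\mathcal{C}$-connection established above: it is conformally invariant, so $\tilde{\mathcal{C}}_a=\mathcal{C}_a$ and in particular $\tilde{\mathcal{R}}_{abc}{}^d=\mathcal{R}_{abc}{}^d$, and it is a Weyl connection (Proposition~\ref{prop:weyl-connection}) whose transition tensor relative to $\nabla$ is \eqref{eq:connection-C-nabla}. The pivotal remark is that the physical version of \eqref{eq:connection-C-nabla} has the form $\Gamma[\tilde{\mathcal{C}},\tilde{\nabla}]^c{}_{ab}=\tilde{g}^{cd}\tilde{g}_{ab}\tilde{\Lambda}_d-\delta_b{}^c\tilde{\Lambda}_a-\delta_a{}^c\tilde{\Lambda}_b$, so that \emph{$\tilde{\Lambda}_a=0$ forces $\mathcal{C}_a=\tilde{\mathcal{C}}_a=\tilde{\nabla}_a$}. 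On a conformally Einstein representative the $\mathcal{C}$-connection is thus literally the Levi-Civita connection of the physical metric, whence $\mathcal{R}_{ab}=\tilde{R}_{ab}$. I would prove both implications by reducing them to this identity.

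For the forward direction, assume $\tilde{g}_{ab}$ is Einstein. Then $\tilde{L}_{ab}=\lambda\tilde{g}_{ab}$ with $\lambda$ constant (the contracted second Bianchi identity in $D\geq 3$), so the physical Cotton-type object $\tilde{\nabla}_{[b}\tilde{L}_{e]p}$ vanishes and therefore $\tilde{\Lambda}_a=0$ by \eqref{eq:define-Lambda}. By the remark above $\mathcal{R}_{ab}=\tilde{R}_{ab}$, which is symmetric, giving $\mathcal{R}_{[ab]}=0$. Writing the Einstein condition as $\tilde{R}_{ab}=(\tilde{R}/D)\tilde{g}_{ab}$ and substituting $\tilde{g}_{ab}=\Omega^{-2}g_{ab}$ together with $\tilde{R}=\tilde{g}^{ab}\mathcal{R}_{ab}=\Omega^2 g^{ab}\mathcal{R}_{ab}=\Omega^2\mathcal{R}$, the powers of $\Omega$ cancel and one obtains $\mathcal{R}_{(ab)}-\tfrac1D g_{ab}\mathcal{R}=0$, i.e.\ both equations of \eqref{eq:einstein-manifold}.

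For the converse I must manufacture the conformal factor, and this is where the real content lies. Given \eqref{eq:einstein-manifold}, the equation $\mathcal{R}_{[ab]}=0$ should be an \emph{integrability condition} guaranteeing that $\Lambda_a$ is (locally) a gradient. Concretely, I expect to show that the antisymmetric part of the Ricci tensor of the Weyl connection $\mathcal{C}_a$ is proportional to $\nabla_{[a}\Lambda_{b]}$, so that $\mathcal{R}_{[ab]}=0$ is equivalent to $\Lambda$ being closed; the Poincaré lemma then yields a positive $\Omega$ with $\Upsilon_a=\nabla_a\ln\Omega=\Lambda_a$ on a neighbourhood. Setting $\tilde{g}_{ab}=\Omega^{-2}g_{ab}$ and feeding $\Upsilon_a=\Lambda_a$ into Lemma~\ref{lem:upsilon-a}, which reads $\Upsilon_a=\Lambda_a-\tilde{\Lambda}_a$, forces $\tilde{\Lambda}_a=0$ without circularity, since $\Lambda_a$ is fixed by $g$ alone. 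Hence $\mathcal{R}_{ab}=\tilde{R}_{ab}$, and the second equation of \eqref{eq:einstein-manifold} becomes, after the same $\Omega$-bookkeeping as above, $\tilde{R}_{ab}=(\tilde{R}/D)\tilde{g}_{ab}$; i.e.\ $\tilde{g}_{ab}$ is Einstein.

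The main obstacle is the curvature computation underlying the converse: I would insert \eqref{eq:connection-C-nabla} into the standard formula $\mathcal{R}_{abc}{}^d=R_{abc}{}^d+2\nabla_{[a}\Gamma^d{}_{b]c}+2\Gamma^d{}_{[a|e|}\Gamma^e{}_{b]c}$ with $\Gamma^c{}_{ab}=\Gamma[\mathcal{C},\nabla]^c{}_{ab}$, contract to the Ricci tensor, and isolate its skew part. The cleanest route to the proportionality $\mathcal{R}_{[ab]}\propto\nabla_{[a}\Lambda_{b]}$ is to evaluate the $\mathcal{C}$-curvature on the metric volume form: because $\mathcal{C}_a\epsilon_{b_1\cdots b_D}=D\Lambda_a\epsilon_{b_1\cdots b_D}$ for a Weyl connection, the identity $\mathcal{C}_{[a}\mathcal{C}_{b]}\epsilon = D\,\nabla_{[a}\Lambda_{b]}\,\epsilon$ exhibits the trace $\mathcal{R}_{abc}{}^c$ as a constant multiple of $d\Lambda$, and the torsion-free first Bianchi identity relates this trace to $\mathcal{R}_{[ab]}$. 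Verifying the precise nonzero constant (so that the equivalence $\mathcal{R}_{[ab]}=0\Leftrightarrow d\Lambda=0$ is genuine) and noting that the resulting $\Omega$ is only local are the points that require care; everything else is the $\Omega$-weight bookkeeping already used in the forward direction.
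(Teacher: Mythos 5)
Your proposal is correct, but it takes a genuinely different route through the central computation. Both arguments share the same skeleton as the paper's proof: in the forward direction, Einstein implies $\tilde{L}_{ab}=\lambda\tilde{g}_{ab}$ with $\lambda$ constant (contracted Bianchi), hence $\tilde{\Lambda}_a=0$ and $\Upsilon_a=\Lambda_a$ via Lemma~\ref{lem:upsilon-a}; in the converse, $\mathcal{R}_{[ab]}=0$ makes $\Lambda_a$ closed, the Poincar\'e lemma produces a local $\psi>0$, and one rescales the metric by $\psi$. Where you diverge is in how the Einstein condition is translated into $\mathcal{C}$-language. The paper substitutes $\Upsilon_a=\Lambda_a$ into the Schouten transformation law \eqref{eq:schouten-tensors-relation} and works through the explicit expansions \eqref{eq:curvature-c-conection}--\eqref{eq:schouten-tensor-c-conection} and \eqref{eq:relation-nabla-c} to arrive at $\lambda\tilde{g}_{ab}=\mathcal{R}_{(ab)}/(D-2)-\mathcal{R}g_{ab}/\bigl(2(D-1)(D-2)\bigr)$. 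You bypass all of this with the structural observation that $\tilde{\Lambda}_a=0$ annihilates the physical transition tensor, so $\mathcal{C}_a=\tilde{\mathcal{C}}_a=\tilde{\nabla}_a$ and literally $\mathcal{R}_{ab}=\tilde{R}_{ab}$, after which both equations of \eqref{eq:einstein-manifold} reduce to $\Omega$-weight bookkeeping; and your derivation of the pivotal skew-Ricci identity via $\mathcal{C}_a\epsilon_{b_1\cdots b_D}=D\Lambda_a\epsilon_{b_1\cdots b_D}$ together with the contracted first Bianchi identity does reproduce the paper's \eqref{eq:diff-lambda} with the correct nonzero constant, $\mathcal{R}_{[ab]}=D\,\nabla_{[a}\Lambda_{b]}$, so the equivalence with $d\Lambda=0$ is genuine. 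Two trade-offs are worth noting. First, your argument leans on the conformal invariance $\tilde{\mathcal{C}}_a=\mathcal{C}_a$, which this paper only imports by citation from \cite{garciaparrado2024einstein}, whereas the paper's proof of the theorem never invokes it and works entirely with unphysical quantities. Second, the paper's heavier computation is not wasted effort: the intermediate formulas \eqref{eq:ricci-tensor-c-conection} and \eqref{eq:schouten-tensor-c-conection} are reused later (e.g.\ to compute $\mathcal{R}^{\xi}_{ab}$ in the plane-fronted-wave example), and the displayed relation identifies the Einstein constant $\lambda$ explicitly. What your version buys is conceptual transparency: it makes manifest that the $\mathcal{C}$-connection is precisely the Levi-Civita connection of the Einstein representative whenever one exists, which is the real reason its Ricci tensor detects the Einstein condition.
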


\begin{proof}
	 If $(\tilde{\mathcal{M}}, \tilde{g}_{ab})$ is an Einstein manifold then
	 $\tilde{L}_{ab}=\lambda \tilde{g}_{ab}$, $\lambda\in\mathbb{R}$, $\tilde{\Lambda}_a=0$ and
	 $\Upsilon_a=\Lambda_a$ (see Lemma \ref{lem:upsilon-a}). Introducing all this information
	 into \eqref{eq:schouten-tensors-relation}, we get
	\begin{equation}
		\lambda\tilde{g}_{ab}=L_{ab}-\frac{1}{2}g_{ab}
		\Lambda_c\Lambda^c+\nabla_{a}\Lambda_{b}+\Lambda_{a}
		\Lambda_{b},
		\label{eq:equal-zero}
	\end{equation}
	The result follows after expressing the previous condition in terms of the $\mathcal{C}$-covariant
	derivative. Let us work out the details:
	first, we denote the Riemann and Ricci tensors of the $\mathcal{C}$-connection by
	$\mathcal{R}_{abc}{}^{d}{}$, $\mathcal{R}_{ab} \equiv \mathcal{R}_{adb}{}^{d}{}$ respectively.
	Since both $\nabla$ and $\mathcal{C}$ are torsionless,
	the formula that relates the Riemann curvature tensors of $\nabla$ and $\mathcal{C}$ is
	\begin{equation}
		\mathcal{R}_{abc}{}^{d}{} = -2\Gamma[\mathcal{C},\nabla]^{d}{}_{e[a}
		\Gamma[\mathcal{C},\nabla]^{e}{}_{b]c} + R_{abc}{}^{d}{}
		- 2\mathcal{C}_{[a} \Gamma[\mathcal{C}, \nabla]^{d}{}_{b]c}.
		\label{eq:curvature-general-formula}
	\end{equation}
	Using (\ref{eq:connection-C-nabla}) in the previous equation we get
	\begin{eqnarray}
		&&
		\mathcal{R}_{abc}{}^{d}{} = R_{abc}{}^{d}{} + 2\Lambda^d \Lambda_{[a}g_{b]c} + 2\delta_{[a}{}^{d}{} \Lambda_{b]} \Lambda_c +	 2\delta_{[b}{}^{d}{}g_{a]c} \Lambda_e \Lambda^e\nonumber\\
		&&
		+ 2\delta_{[b}{}^{d}{}\mathcal{C}_{a]} \Lambda_c + 2\delta_{c}{}^{d}{} {\mathcal C}_{[a} \Lambda_{b]}
		+ 2g^{de} g_{c[a} \mathcal{C}_{b]} \Lambda_e.
		\label{eq:curvature-c-conection}
	\end{eqnarray}
	From that, we can calculate the relation between the unphysical and the $\mathcal{C}$-Ricci tensors obtaining
	\begin{eqnarray}
		&&
		R_{ac} = \mathcal{R}_{ac} + (D-2)\Lambda_a \Lambda_c + (2-D)(\Lambda_b \Lambda^b) g_{ac} +\nonumber\\
		&&
		(1-D)\mathcal{C}_a \Lambda_c + \mathcal{C}_c \Lambda_a - g_{ac} (g^{db}\mathcal{C}_d \Lambda_b),
		\label{eq:ricci-tensor-c-conection}
	\end{eqnarray}
	and the Ricci and $\mathcal{C}$-Ricci scalars
	\begin{equation}
		R = g^{ac}R_{ac}= \mathcal{R} -(D-1)(D-2)(g^{ab}\Lambda_a \Lambda_b) +2(1-D) (g^{ab}\mathcal{C}_b \Lambda_a).
		\label{eq:ricci-scalar-c-conexion}
	\end{equation}\\
	We have now all the ingredients to compute the Schouten tensor $L_{ab}$ in terms of the $\mathcal{C}$-connection
	using its definition \eqref{schouten tensor definition}. The result is
	\begin{equation}
		L_{ab} =  \frac{\mathcal{R}_{ab}-D\mathcal{C}_a\Lambda_b +2\mathcal{C}_{(a}\Lambda_{b)}}{D-2}
		+\Lambda_a \Lambda_b - \frac{1}{2} \Lambda_c \Lambda^c g_{ab}
		- \frac{\mathcal{R} g_{ab}}{2(D-1)(D-2)}.
		\label{eq:schouten-tensor-c-conection}
	\end{equation}
	From \eqref{eq:ricci-tensor-c-conection} or
	\eqref{eq:schouten-tensor-c-conection} we get
	\begin{equation}
	\mathcal{C}_{[a}\Lambda_{c]} = \frac{1}{D}\mathcal{R}_{[ac]}.
	\label{eq:diff-lambda}
	\end{equation}
	Also we have
	\begin{equation}
		\nabla_b \Lambda_a =\mathcal{C}_b \Lambda_a -\Gamma[\nabla, \mathcal{C}]^{c}{}_{ba} \Lambda_c,
	\end{equation}
	which, by means of (\ref{eq:connection-C-nabla}), becomes
	\begin{equation}
		\nabla_b \Lambda_a = -2 \Lambda_a \Lambda_b + (\Lambda_c \Lambda^c) g_{ab} +\mathcal{C}_b \Lambda_a.
		\label{eq:relation-nabla-c}
	\end{equation}
	Inserting \eqref{eq:ricci-tensor-c-conection}, \eqref{eq:ricci-scalar-c-conexion}, \eqref{eq:relation-nabla-c}
	into \eqref{eq:equal-zero} yields
	\begin{equation}
	 \lambda \tilde{g}_{ab} =
	 \frac{\mathcal{R}_{(ab)}}{D-2}-
		\frac{\mathcal{R} g_{ab}}{2(D-1)(D-2)}.
	\end{equation}
	which leads straight away to the last equation in \eqref{eq:einstein-manifold}. Finally using $\Upsilon_a=\Lambda_a$
	in \eqref{eq:diff-lambda} we obtain the remaining condition in \eqref{eq:einstein-manifold}.

Conversely if \eqref{eq:einstein-manifold} holds then
combining \eqref{eq:einstein-manifold} and \eqref{eq:diff-lambda} we obtain
$\mathcal{C}_{[a}\Lambda_{c]} =\nabla_{[a}\Lambda_{c]}=0$.
Therefore $\Lambda_a$ is locally exact and we have $\Lambda_a=\nabla_a\log\psi=\nabla_a\psi/\psi$ for some function $\psi>0$.
Define a new physical metric tensor by $\tilde{G}_{ab}=g_{ab}/\psi^2$. Equation \eqref{eq:schouten-tensors-relation}
enables us to find the relation between the Schouten tensor $\tilde{S}_{ab}$ of $\tilde{G}_{ab}$ and the
Schouten tensor $L_{ab}$ of $g_{ab}$
\begin{equation}
	L_{ab}=\tilde{S}_{ab}-\upsilon_{a}\upsilon_{b}-\nabla_{a}\upsilon_{b}+\frac{1}{2}g_{ab}\upsilon_{c}\upsilon^{c},\quad
	\upsilon_a\equiv \frac{\nabla_a\psi}{\psi}.
\end{equation}
Setting $\upsilon_{a}=\Lambda_a$ and
inserting \eqref{eq:schouten-tensor-c-conection}, \eqref{eq:relation-nabla-c}
in the previous relation we conclude that
$\tilde{S}_{ab}$ is traceless, which only is possible if $\tilde{G}_{ab}$ is an Einstein metric.
\end{proof}

If the Weyl endomorphism is not invertible then Theorem \ref{theo:charact-einstein-spaces}
still holds if we formulate it in terms of a
$\mathcal{C}^\xi$-connection. For that, we need a preliminary result.
\begin{proposition}
Let $(\mathcal{M}, g)$ be a $D$-dimensional pseudo-Riemannian manifold and assume that
the Weyl endomorphism of its Levi-Civita connection is {\bf not} invertible.
If $(\tilde{\mathcal{M}}, \tilde{g}_{ab})$ is an Einstein manifold then
\begin{equation}
C_{be}{}^{qp}\,(W^{+})_{qp}{}^{rs}
\nabla_{[r} L_{s]a}
= \nabla_{[b} L_{e]a}.
\label{eq:charact-einstein-spaces-sing}
\end{equation}
\label{prop:charact-einstein-spaces-sing}
\end{proposition}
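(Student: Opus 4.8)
The plan is to obtain \eqref{eq:charact-einstein-spaces-sing} as a direct specialization of the general compatibility condition \eqref{eq:compatibility-condition} to the Einstein case. The first point to secure is that \eqref{eq:compatibility-condition} actually holds in the present situation. Recall that \eqref{eq:equation-upsilon} was derived by differentiating the Schouten relation \eqref{eq:schouten-tensors-relation}, so it is an identity satisfied by the genuine gradient $\Upsilon_a=\nabla_a\Omega/\Omega$ of \eqref{upsilon definition} whenever $g_{ab}$ and $\tilde{g}_{ab}$ are conformally related through \eqref{conformal relation}. In particular \eqref{eq:equation-upsilon} possesses a solution, and since the Weyl endomorphism is not invertible the Moore--Penrose solvability criterion (Theorem~\ref{theo:compatibility-condition}) then guarantees that the compatibility condition \eqref{eq:compatibility-condition} is satisfied.

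The crucial step is to show that the physical factor $\tilde{\nabla}_{[r}\tilde{L}_{s]a}$ vanishes identically when $\tilde{g}_{ab}$ is Einstein. Indeed, the Einstein condition makes the physical Schouten tensor pure trace, $\tilde{L}_{ab}=\lambda\,\tilde{g}_{ab}$, and in dimension $D\geq 3$ the contracted second Bianchi identity forces the scalar curvature, and hence $\lambda$, to be constant. Metric compatibility of the physical Levi-Civita connection, $\tilde{\nabla}_r\tilde{g}_{sa}=0$, then gives $\tilde{\nabla}_r\tilde{L}_{sa}=0$, so in particular $\tilde{\nabla}_{[r}\tilde{L}_{s]a}=0$ and $\tilde{\nabla}_{[b}\tilde{L}_{e]a}=0$.

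Finally, substituting these two vanishing relations into \eqref{eq:compatibility-condition} removes every physical-derivative term, leaving
\[
-\,C_{be}{}^{qp}\,(W^{+})_{qp}{}^{rs}\,\nabla_{[r} L_{s]a}=-\,\nabla_{[b} L_{e]a},
\]
and cancelling the common sign yields exactly \eqref{eq:charact-einstein-spaces-sing}. The argument is essentially a substitution, so I anticipate no serious obstacle; the only delicate point is the constancy of $\lambda$, which is precisely where the standing hypothesis $D\geq 3$ is used, since in dimension two the Einstein condition would not pin down the scalar curvature and the vanishing of $\tilde{\nabla}_r\tilde{L}_{sa}$ could fail.
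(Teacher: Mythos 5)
Your proof is correct and follows essentially the same route as the paper: the paper's own (very terse) proof likewise specializes the compatibility condition \eqref{eq:compatibility-condition} to the Einstein case, where $\tilde{L}_{ab}=\lambda\tilde{g}_{ab}$ makes all physical-derivative terms drop out. Your extra justifications---that \eqref{eq:compatibility-condition} is in force because $\Upsilon_a$ genuinely solves \eqref{eq:equation-upsilon} so Theorem~\ref{theo:compatibility-condition} applies, and that $\lambda$ is constant by the contracted Bianchi identity in dimension $D\geq 3$---merely fill in details the paper leaves implicit.
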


\begin{proof}
If $(\tilde{\mathcal{M}}, \tilde{g}_{ab})$ is an Einstein manifold
then $\tilde{L}_{ab}=\lambda g_{ab}$, $\lambda\in\mathbb{R}$
and if the Weyl endomorphism is not invertible,
\eqref{eq:compatibility-condition} leads straightforwardly
to \eqref{eq:charact-einstein-spaces-sing}.

\end{proof}

\begin{theorem}\label{thm:main2}
Let $(\mathcal{M}, g)$ be a $D$-dimensional pseudo-Riemannian manifold and assume that
the Weyl endomorphism of its Levi-Civita connection is {\bf not} invertible.
Then, under the hypotheses of Proposition \ref{prop:charact-einstein-spaces-sing},
$(\tilde{\mathcal{M}}, \tilde{g}_{ab})$ is an Einstein manifold
if and only if there is a tensor $\xi^a{}_{bc}=\xi^a{}_{[bc]}$ such that
$\Lambda^{\xi}_a$ defined by
	\begin{equation}
		\Lambda^{\xi}_{q} =
		\frac{4}{1-D}\nabla_{[b}L_{e]p}
		(W^+)_{q}{}^{pbe}
		+\frac{2}{1-D}\left(\delta^{m}{}_{[p}\delta^{n}{}_{q]}-
		(W^+)_{pq}{}^{rs}C^{mn}{}_{rs}\right)\xi^p{}_{mn},
		\label{eq:xi-einstein-condition}
	\end{equation}
	is closed and for that $\xi^a{}_{bc}$,
	the $\mathcal{C}^\xi$-connection,
	Ricci tensor $\mathcal{R}^\xi_{ab}$ and Ricci scalar $\mathcal{R}^\xi$
	fulfill the conditions
\begin{equation}
	\mathcal{R}^\xi_{[ab]} = 0, \quad
	\mathcal{R}^\xi_{(ab)} - \frac{1}{D} g_{ab} \mathcal{R}^\xi = 0.
\label{eq:einstein-manifold-plus}
\end{equation}
\label{theo:charact-einstein-spaces-plus}
\end{theorem}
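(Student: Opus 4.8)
The plan is to adapt the proof of Theorem~\ref{theo:charact-einstein-spaces} to the degenerate setting, systematically replacing the invertible-Weyl objects $\Lambda_a$, $\mathcal{C}_a$, $\mathcal{R}_{ab}$ by their pseudo-inverse counterparts $\Lambda^\xi_a$, $\mathcal{C}^\xi_a$, $\mathcal{R}^\xi_{ab}$, and using the non-invertible branch \eqref{eq:upsilon-schouten-plus} of Lemma~\ref{lem:upsilon-a} in place of \eqref{eq:upsilon-schouten}. The structural fact I would lean on is that the transition tensor $\Gamma[\mathcal{C}^\xi,\nabla]$ has exactly the form \eqref{eq:connection-C-nabla} with $\Lambda_d$ replaced by $\Lambda^\xi_d$; consequently every purely connection-theoretic identity derived for $\mathcal{C}$ — the curvature formula \eqref{eq:curvature-c-conection}, the Ricci and scalar relations \eqref{eq:ricci-tensor-c-conection}--\eqref{eq:ricci-scalar-c-conexion}, the Schouten expression \eqref{eq:schouten-tensor-c-conection}, the antisymmetric identity \eqref{eq:diff-lambda}, and the bridge \eqref{eq:relation-nabla-c} between $\nabla_b\Lambda_a$ and $\mathcal{C}_b\Lambda_a$ — carries over verbatim with a $\xi$ attached. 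In particular, antisymmetrising the $\xi$-analogue of \eqref{eq:relation-nabla-c} and combining it with the $\xi$-analogue of \eqref{eq:diff-lambda} gives $\nabla_{[a}\Lambda^\xi_{c]}=\mathcal{C}^\xi_{[a}\Lambda^\xi_{c]}=\tfrac1D\mathcal{R}^\xi_{[ac]}$, so that the first condition in \eqref{eq:einstein-manifold-plus} is equivalent to the closedness of $\Lambda^\xi_a$; this is why the two requirements of the statement sit together so naturally.

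For the forward implication I would assume $(\tilde{\mathcal M},\tilde g_{ab})$ Einstein, so $\tilde L_{ab}=\lambda\tilde g_{ab}$ and hence $\tilde\nabla_{[b}\tilde L_{e]a}=0$. Feeding this into \eqref{eq:upsilon-schouten-plus} and choosing the arbitrary tensor $\tilde\xi$ to vanish kills $\tilde{\Lambda}^{\tilde\xi}_a$, leaving $\Upsilon_a=\Lambda^\xi_a$ for the value of $\xi$ that reproduces the true solution of \eqref{eq:equation-upsilon}; the existence of such a solution is guaranteed precisely by the compatibility condition \eqref{eq:compatibility-condition}, which under $\tilde L_{ab}=\lambda\tilde g_{ab}$ is Proposition~\ref{prop:charact-einstein-spaces-sing}. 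Since $\Upsilon_a=\nabla_a\log\Omega$ is exact, this $\Lambda^\xi_a$ is automatically closed. I would then substitute $\Upsilon_a=\Lambda^\xi_a$ into the Schouten relation \eqref{eq:schouten-tensors-relation} and, exactly as in the chain \eqref{eq:ricci-tensor-c-conection}--\eqref{eq:relation-nabla-c} of Theorem~\ref{theo:charact-einstein-spaces}, reduce the Einstein equation to $\lambda\tilde g_{ab}=\mathcal{R}^\xi_{(ab)}/(D-2)-\mathcal{R}^\xi g_{ab}/\bigl(2(D-1)(D-2)\bigr)$, whose trace-free part is the second condition of \eqref{eq:einstein-manifold-plus}; the first condition follows from $\nabla_{[a}\Upsilon_{c]}=0$ together with the $\xi$-analogue of \eqref{eq:diff-lambda}.

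For the converse I would start from a $\xi$ for which $\Lambda^\xi_a$ is closed and \eqref{eq:einstein-manifold-plus} holds. Closedness gives, locally, $\Lambda^\xi_a=\nabla_a\log\psi$ for some $\psi>0$ by the Poincar\'e lemma, and I would define the candidate physical metric $\tilde G_{ab}=g_{ab}/\psi^2$. Writing the Schouten relation \eqref{eq:schouten-tensors-relation} for the pair $(g_{ab},\tilde G_{ab})$ with $\upsilon_a=\Lambda^\xi_a$, and inserting the $\xi$-versions of \eqref{eq:schouten-tensor-c-conection} and \eqref{eq:relation-nabla-c}, the symmetric trace-free condition in \eqref{eq:einstein-manifold-plus} forces the Schouten tensor of $\tilde G_{ab}$ to be pure trace, i.e. proportional to $\tilde G_{ab}$, which is exactly the Einstein property. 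This mirrors the closing argument of Theorem~\ref{theo:charact-einstein-spaces} line for line.

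The main obstacle is the forward step, where one must certify that the \emph{genuine} gradient $\Upsilon_a$ is recovered by the pseudo-inverse-plus-kernel parametrisation $\Lambda^\xi_a$ for some admissible $\xi$. Unlike the invertible case, \eqref{eq:equation-upsilon} no longer determines $\Upsilon_a$ uniquely: solvability is conditional on the compatibility relation \eqref{eq:compatibility-condition}, and the kernel term proportional to $\bigl(\delta^m{}_{[p}\delta^n{}_{q]}-(W^+)_{pq}{}^{rs}C^{mn}{}_{rs}\bigr)$ must be tuned by $\xi$ to land on the actual conformal factor. Making this rigorous requires checking that the Moore--Penrose identity \eqref{eq:pseudo-inverse-indices} used to derive \eqref{eq:upsilon-non-invertible} indeed exhausts the solution set, and that the closedness of the resulting $\Lambda^\xi_a$ is not an extra constraint but a consequence of $\Upsilon_a$ being exact — the remaining computations being the routine $\xi$-decorated transcription of the invertible proof.
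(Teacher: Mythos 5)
Your proposal is correct and takes essentially the same route as the paper's proof: a $\xi$-decorated transcription of Theorem~\ref{theo:charact-einstein-spaces}, choosing $\tilde{\xi}$ to annihilate $\tilde{\Lambda}^{\tilde\xi}_a$ so that $\Upsilon_a=\Lambda^\xi_a$ (whence closedness is inherited from the exactness of $\Upsilon_a$), and running the converse through $\Lambda^\xi_a=\nabla_a\log\psi$ and $\tilde{G}_{ab}=g_{ab}/\psi^2$ exactly as in the invertible case. The solvability issue you flag as the main obstacle is precisely what Theorem~\ref{theo:compatibility-condition} settles --- the Moore--Penrose parametrisation does exhaust the solution set --- and this is already built into Lemma~\ref{lem:upsilon-a}, so the paper disposes of it the same way you propose.
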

\begin{proof}
	If $(\tilde{\mathcal{M}}, \tilde{g}_{ab})$ is an Einstein manifold then
	$\tilde{L}_{ab}=\lambda \tilde{g}_{ab}$, $\lambda\in\mathbb{R}$, and if we
	pick $\tilde{\xi}^p{}_{mn}$ so that $\left(\delta^{m}{}_{[p}\delta^{n}{}_{q]}-
	(\tilde{W}^+)_{pq}{}^{rs}\tilde{C}^{mn}{}_{rs}
	\right)\tilde{\xi}^p{}_{mn}=0$ in \eqref{eq:define-Lambda-plus-tilde} then
	$\tilde{\Lambda}^{\tilde\xi}_a=0$ and hence
	$\Upsilon_a=\Lambda^{\xi}_a$ for some $\xi^p{}_{mn}=\xi^p{}_{[mn]}$
	(see Lemma \ref{lem:upsilon-a}). Thus $\Lambda^{\xi}_a$ is closed
	as $\Upsilon_a$ is. Now,
	an argument similar to
	the one in the proof of Theorem \ref{theo:charact-einstein-spaces}
	but replacing $\Lambda_a$ by $\Lambda_a^\xi$ and $\mathcal{C}_a$
	by $\mathcal{C}^\xi_a$ leads to
	\eqref{eq:einstein-manifold-plus}.
	The converse's proof proceeds also along same lines as
	in Theorem \ref{theo:charact-einstein-spaces}, again
	replacing $\Lambda_a$ by $\Lambda_a^\xi$ and $\mathcal{C}_a$
	by $\mathcal{C}^\xi_a$ where necessary.
	Finally, $\mathcal{R}^\xi_{[ab]} = 0$ entails
	\begin{equation}
		\mathcal{C}_{[a}\Lambda^{\xi}_{b]} = 0
		\label{eq:xi-constraint}
	\end{equation}
(see \eqref{eq:diff-lambda}) or in other words the 1-form
$\Lambda^{\xi}_{b}$ must be closed as $\mathcal{C}^\xi_a$ is torsionless.
\end{proof}

Theorem \ref{theo:charact-einstein-spaces-plus} provides
a characterization of conformal Einstein spaces that does not depend
only on the unphysical metric, as happens with Theorem \ref{theo:charact-einstein-spaces}
because it contains an additional quantity that is the tensor $\xi^p{}_{mn}$.
However, any choice of $\xi^p{}_{mn}$ fulfilling the conditions of
Theorem \ref{theo:charact-einstein-spaces-plus} would be suitable.
One might not need to solve a differential equation to make such a choice.
See subsection \ref{subsec:ppwave} for an example of this.

\section{Examples}\label{applications}

\subsection{Conformally flat spaces}
Let us assume that the physical manifold
$(\mathcal{M},\tilde{g}_{ab})$ is flat. Then
$(\mathcal{M},g_{ab})$ is conformally flat and both the
physical and unphysical Weyl endomorphisms vanish.
Then, according to the definition given in the appendix \ref{app:pseudo-inverse}
the Weyl endomorphism pseudo-inverse is $(W^+)^{ab}{}_{cd}=0$ and
in that case eqns. \eqref{eq:define-Lambda-plus}-
\eqref{eq:define-Lambda-plus-tilde} yield

\begin{equation}
	\Lambda_{a}^{\xi}=\frac{2\xi^p{}_{pa}}{1-D}, \quad
	\tilde{\Lambda}_{a}^{\tilde \xi}=\frac{2\tilde{\xi}^p{}_{pa}}{1-D},
	\label{eq:lambda-flat-space}
\end{equation}
where
\begin{equation}
\xi^p{}_{pa}=\tilde{\xi}^p{}_{pa}+\frac{(1-D)}{2}\Upsilon_a.
\end{equation}

Using the results found before, we can use the Theorem
\ref{th:pseudo-differential-operator} to find the explicit form of the
pseudo-differential operator $D_c^{s,(p,q)}$.
First, we use \eqref{eq:definition-M} and \eqref{eq:definition-M-bar} to get

\begin{equation}
M^c{}_{ba}(\Lambda, \mathbf{g}, q, s) = \frac{2}{1-D}\left[
\left(\frac{s +q}{q}\right)\xi^{p}{}_{pb}\delta^{c}{}_{a}
+\xi^{p}{}_{pa}\delta^{c}{}_{b}-g_{ab} g^{cd} \xi^p{}_{pd}\right],
\label{eq:M-flat-space}
\end{equation}

\begin{equation}
\overline{M}^c{}_{ba}(\Lambda,\mathbf g,p,s)
=\frac{2}{1-D}\left[
	\left(\frac{s-p}{p}\right)\xi^{p}{}_{pb}\,\delta^{c}{}_{a}
	-\xi^{p}{}_{pa}\,\delta^{c}{}_{b}
	+ g_{ab}\,g^{cd}\,\xi^{p}{}_{pd}
\right].
\label{eq:M-flat-space-bar}
\end{equation}

Next we replace in \eqref{eq:conf-diff} getting

\begin{eqnarray}
&&
D_c^{s,(p,q)} K^{a_1 \ldots a_p}{}_{b_1 \ldots b_q}
\equiv \nabla_c K^{a_1 \ldots a_p}{}_{b_1 \ldots b_q}
+\nonumber\\
&&
\frac{2}{1-D}\sum_{j=1}^{p}\!\left[
\left(\frac{s-p}{p}\right)\xi^{p}{}_{pc}\,\delta^{a_j}{}_{q_j}
-\xi^{p}{}_{pq_j}\,\delta^{a_j}{}_{c}
+g_{q_j c}\,g^{a_j d}\,\xi^{p}{}_{pd}
\right]
K^{a_1 \ldots a_{j-1} q_{j} a_{j+1} \ldots a_p}{}_{b_1 \ldots b_q}
+\nonumber\\
&&
\frac{2}{1-D}\sum_{j=1}^{q}\!\left[
\left(\frac{s+q}{q}\right)\xi^{p}{}_{pc}\,\delta^{r_j}{}_{b_j}
+\xi^{p}{}_{pb_j}\,\delta^{r_j}{}_{c}
-g_{b_j c}\,g^{r_j d}\,\xi^{p}{}_{pd}
\right]
K^{a_1 \ldots a_p}{}_{b_1 \ldots b_{j-1}r_{j}b_{j+1} \ldots b_q}.
\label{eq:pseudo-diff-flat-space}
\end{eqnarray}

Also we can give a formula for the $\mathcal{C}$-connection, which is given by $s=0$, so

\begin{eqnarray}
&&
\mathcal{C}_c  K^{a_1 \ldots a_p}{}_{b_1 \ldots b_q}
\equiv D_c^{0,(p,q)} K^{a_1 \ldots a_p}{}_{b_1 \ldots b_q}
\equiv \nabla_c K^{a_1 \ldots a_p}{}_{b_1 \ldots b_q}
+\nonumber\\
&&
\frac{2}{1-D}\sum_{j=1}^{p}\!\left[
-\xi^{p}{}_{pc}\,\delta^{a_j}{}_{q_j}
-\xi^{p}{}_{pq_j}\,\delta^{a_j}{}_{c}
+g_{q_j c}\,g^{a_j d}\,\xi^{p}{}_{pd}
\right]
K^{a_1 \ldots a_{j-1} q_{j} a_{j+1} \ldots a_p}{}_{b_1 \ldots b_q}
+\nonumber\\
&&
\frac{2}{1-D}\sum_{j=1}^{q}\!\left[
\xi^{p}{}_{pc}\,\delta^{r_j}{}_{b_j}
+\xi^{p}{}_{pb_j}\,\delta^{r_j}{}_{c}
-g_{b_j c}\,g^{r_j d}\,\xi^{p}{}_{pd}
\right]
K^{a_1 \ldots a_p}{}_{b_1 \ldots b_{j-1}r_{j}b_{j+1} \ldots b_q}.
\label{eq:c-connection-flat-space}
\end{eqnarray}
If $D=3$ the Weyl tensor is identically zero and therefore
the analysis performed in this section can be adapted to that situation.

\subsection{The Robinson--Trautmann class of solutions}
\label{subsec:rt}
The Robinson--Trautmann (RT) spacetimes form a distinguished class of exact solutions
of Einstein's field equations describing radiative, asymptotically flat geometries.
They were originally introduced by Robinson and Trautman
\cite{robinsontrautman1960} and subsequently developed as a framework for the
description of gravitational radiation in the Bondi sense
\cite{Trautman1962}. These spacetimes are characterized by the existence of a null
geodesic congruence that is shear-free, twist-free, and expanding. This geometric
restriction leads to a remarkable simplification of the field equations and allows
for a rigorous treatment of nonlinear gravitational radiation.
In real coordinates $(u,r,x,y)$, the Robinson--Trautmann metric can be written as
\begin{equation}
ds^2
=
-2H(u,r,x,y)\,du^2
-2\,du\,dr
+2r^2 P^{-2}(u,x,y)\,(dx^2+dy^2),
\label{eq:RTmetric-real}
\end{equation}
where $u$ is a retarded time coordinate, $r$ is an affine parameter along outgoing
null geodesics, and $(x,y)$ are stereographic coordinates on the angular sections.
The function $P(u,x,y)$ determines the intrinsic geometry of the null hypersurfaces
$u=\mathrm{const}$.
If the above spacetime admits a geodesic, shear-free, twist-free
but expanding null congruence then the function $H$ is
given by
\begin{equation}
H
=
-r\,\partial_u \ln P
+ P^2\,(\partial_x^2+\partial_y^2)\ln P
-\frac{m(u)}{r}
-\frac{\Lambda}{6}\,r^2,
\label{eq:HRT-real}
\end{equation}
(see, for example, Refs.~\cite{Stephani2003,GriffithsPodolsky2009} for details).

The Robinson--Trautmann class includes important special cases such as the
Schwarzschild solution, as well as radiative black hole
spacetimes with time-dependent multipole moments. Owing to their exact nature and
their clear geometric and physical interpretation, RT solutions provide a valuable
testing ground for analytical, geometric, and numerical studies of gravitational
radiation \cite{GriffithsPodolsky2009}.

In this section we are going to apply Theorem \ref{theo:charact-einstein-spaces}
to find out under which conditions \eqref{eq:RTmetric-real} is conformal to
an Einstein space.
First of all, we need to compute the Weyl endomorphism
$C^{ab}{}_{cd}$ and from it its inverse $W^{ab}{}_{cd}$.
We will restrict ourselves to a situation in which
$H=H(u,r)$, $P=P(u)$.
To work out these computations, we define a vector bundle
whose standard fibre is a 6-dimensional vector space. In this
vector bundle these endomorphisms are naturally 1-contravariant, 1-
covariant tensors
(hence we name it the \emph{endomorphism vector bundle}).
Indices in the endomorphism vector bundle are denoted by
capital latin letters $A$, $B$,
$C$,  . . . and we have the relations
\begin{equation}
C_A{}^{B}=C^{ab}{}_{cd}\sigma^B{}_{ab}\tilde{\sigma}_A{}^{cd},\quad
W_A{}^{B}=W^{cd}{}_{ab}\sigma^B{}_{cd}\tilde{\sigma}_A{}^{ab}.\label{eq:cplus-end-rt}
\end{equation}
Here $\sigma^A{}_{ab}$, $\tilde{\sigma}_A{}^{ab}$ are
\emph{soldering forms}
that verify the algebraic properties

\begin{eqnarray}
&&
\sigma^A{}_{[ab]}=\sigma^A{}_{ab},\quad
\tilde{\sigma}_A{}^{[ab]}=\tilde{\sigma}_A{}^{ab}\\
&&
\delta_A{}^{B} =
\frac{1}{2}
\left(
-\delta_d{}^{a}\,\delta_b{}^{c}
+ \delta_b{}^{a}\,\delta_d{}^{c}
\right)
\tilde{\sigma}_A{}^{bd}\,\sigma_{ac}{}^{B},\quad
\frac{1}{2}
\left(
\delta_a{}^{b}\,\delta_c{}^{d}
-\delta_a{}^{d}\,\delta_c{}^{b}
\right)=
\tilde{\sigma}_A{}^{bd}\,\sigma_{ac}{}^{A}.
\end{eqnarray}
These properties do not uniquely
fix the soldering forms. If we choose a frame
$E\equiv\{e^1,e^2,e^3,e^4,e^5,e^6\}$
in the endomorphism
bundle and use the coordinate frame in the spacetime tangent
bundle, we have that a possible choice is
\begin{eqnarray}
\sigma^{1}_{u r} = 1,\;
\sigma^{2}_{u x^1} = 1,\;
\sigma^{3}_{u x^2} = 1,\;
\sigma^{4}_{r x^1} = 1,\;
\sigma^{5}_{r x^2} = 1,\;
\sigma^{6}_{x^1 x^2} = 1,
\\[1ex]
\tilde{\sigma}^{1}_{u r} = \frac{1}{2},\;
\tilde{\sigma}^{2}_{u x^1} = \frac{1}{2},\;
\tilde{\sigma}^{3}_{u x^2} = \frac{1}{2},\;
\tilde{\sigma}^{4}_{r x^1} = \frac{1}{2},\;
\tilde{\sigma}^{5}_{r x^2} = \frac{1}{2},\;
\tilde{\sigma}^{6}_{x^1 x^2} = \frac{1}{2}.
\label{eq:soldering-rt}
\end{eqnarray}
Now the practical procedure is to
start from the components of $C^{ab}{}_{cd}$
in the coordinate frame,
use \eqref{eq:cplus-end-rt}
and \eqref{eq:soldering-rt}
to compute $C_A{}^B$ in the frame $E$, use
a suitable algorithm to compute $W_A{}^B$ from
$C_A{}^B$ and then use again \eqref{eq:cplus-end-rt},
\eqref{eq:soldering-rt} to compute the components
of $(W^+)_{ab}{}^{cd}$ in the coordinate frame.
We display the values of $C_A{}^B$ and
$W_A{}^B$:
\begin{equation}
C_A{}^B=
\frac{2H - 2r\,H_{r} + r^{2} H_{rr}}{3 r^{2}}\,
\operatorname{diag}(2,1,1,1,1,2),\quad
W_A{}^B
=
\frac{3 r^{2}}{2H - 2r\,H_{r} + r^{2} H_{rr}}\,
\operatorname{diag}\!\left(\tfrac12,1,1,1,1,\tfrac12\right).
\end{equation}

Using \eqref{eq:cplus-end-rt} we can compute
$W^{ab}{}_{cd}$ and combining this result with
the Schouten tensor \eqref{schouten tensor definition} and \eqref{eq:define-Lambda}
yields
\begin{equation}
\Lambda
=
-\frac{1}{r}\left(
1+\frac{r^{3} H_{rrr}}{\D}
\right)\,dr
+
\left(
\frac{3P'}{P}
-
\frac{\D_{u}}{\D}
\right)\,du,
\label{eq:lambda-rt}
\end{equation}
where we introduced $\D$ as follows
\begin{equation}
\D\equiv 2H - 2r\,H_{r} + r^{2} H_{rr}.
\end{equation}
Now we can apply Theorem \ref{theo:charact-einstein-spaces}.
Eq. \eqref{eq:einstein-manifold} reduces to a set
in which one of the equations is
\begin{equation}
\frac{r\bigl(4 H_{rrr} + r H_{rrrr}\bigr)}{\D} = 0.
\end{equation}
This expression can be integrated yielding
\begin{equation}
H(u,r)
=
\frac{1}{2}\,Q(u)\,r^{2}
+
r\,R(u)
-
\frac{T(u)}{6\,r}
+ U(u),
\label{eq:value-of-H}
\end{equation}
where $Q$, $R$, $T$, $U$ are arbitrary functions of $u$.
This last equations enables us to write \eqref{eq:einstein-manifold}
in terms of $Q$, $R$, $T$, $U$. The result reduces to the
following two equations
\begin{equation}
\frac{P'}{P} = \frac{R T + 2U^{2} + T'}{2T}.
\label{eq:p-prime}
\end{equation}

\begin{equation}
\begin{aligned}
0 ={}&
60 r^{2}(-T + 2rU)\,P'^{2}
+ 2P\Bigl(\,P'\Bigl(
6Q r^{3}T + T^{2} - 6rTU + 24 r^{2}U^{2}
\\
&\qquad\qquad
+ 6r^{2}R(T + 2rU) + 18 r^{2}T' - 36 r^{3}U'
\Bigr)
+ 6r^{2}(T - 2rU)\,P''\Bigl)
\\
&\quad
+ P^{2}\Bigl(
6r^{2}R^2 T - 2TU^{2} + 12 rU^{3}
- 3 r^{3}T Q' - 6 r^{2}T R'
\\
&\qquad
+ 3Q r^{3}(RT + 2U^{2} - T')
- T T' + 6 rU T' - 24 r^{2}U U'
\\
&\qquad
- R\bigl(T^{2} - 6 rTU + 12 r^{2}(-U^{2} + rU')\bigr)
- 6 r^{2}T'' + 12 r^{3}U''
\Bigr),
\end{aligned}
\label{eq:no-p-prime}
\end{equation}
where \(P\neq 0\), \(T\neq 0\) was assumed.
Using \eqref{eq:p-prime} to compute the second derivative of \(P\)
and replacing the result into
\eqref{eq:no-p-prime} we obtain
\begin{equation}
\begin{aligned}
0 ={}&
3 Q R T^{3}
+ 6 Q T^{2} U^{2}
+ Q T^{2} T'
+ 12 R^{2} T^{2} U
-16 R T^{2} U'
+ 40 R T U^{3}
+ 20 R T U T'
\\[0.3em]
&{}- T^{3} Q'
- 4 T^{2} U R'
+ 4 T^{2} U''
- 40 T U^{2} U'
- 4 T U T''
- 12 T T' U'
\\[0.3em]
&{}+ 32 U^{5}
+ 40 U^{3} T'
+ 12 U (T')^{2}.
\end{aligned}
\label{eq:rtu}
\end{equation}
where the
$r$ dependence was factored out. Summarizing: Theorem \ref{theo:charact-einstein-spaces}
guarantees that \eqref{eq:RTmetric-real} is conformal to an Einstein space if
$H$ takes the form \eqref{eq:value-of-H}, $P\neq 0$, $T\neq 0$ and the functions
$Q$, $R$, $U$, $T$ are chosen as solutions of \eqref{eq:rtu}. To illustrate that
at least a solution exists, take the following values
\begin{equation}
 P(u)=e^{\frac{\lambda u}{2}},\quad T(u)=e^{\lambda u},\quad
 Q=R=U=0.
\end{equation}
The metric becomes
\begin{equation}
 ds^2=-\frac{2e^{\lambda u}}{r}du^2-2dudv+2e^{-\lambda u}r^2(dx^2+dy^2).
\end{equation}
This is not an einstein space but according to Theorem \ref{theo:charact-einstein-spaces}
it must be conformal to an Einstein space.

\subsection{Plane fronted waves}
\label{subsec:ppwave}
Let us consider the 4-dimensional Lorentzian manifold
$(\mathcal{M}, g_{ab})$ whose metric tensor
is given by the line element
\begin{equation}
ds^2=H(u,x^1,x^2)du^2-2du dv+(dx^1)^2+(dx^2)^2.
\label{eq:pp-wave}
\end{equation}
Here, $-\infty < u, v, x^1, x^2<\infty$ are global coordinates
on $\mathcal{M}$ and $H:\mathbb{R}^3\rightarrow\mathbb{R}$
is a smooth function. As is well-known $(\mathcal{M}, g_{ab})$
is an Einstein manifold if and only if the function $H$
fulfills the differential equation
\begin{equation}
 H_{x_1x_1}+H_{x_2x_2}=k H,\quad k\in\mathbb{R}.
 \label{eq:pp-wave-vacuum}
\end{equation}
Spacetimes of the form \eqref{eq:pp-wave} belong to the class of Brinkmann metrics \cite{Brinkmann1925}
describing \emph{plane-fronted waves}.
In this section we address the problem of finding the necessary
and sufficient conditions under which \eqref{eq:pp-wave}
is \emph{conformal} to an Einstein space using the tools of
Theorem \ref{theo:charact-einstein-spaces-plus}.
First of all, we need to compute the Weyl endomorphism
$C^{ab}{}_{cd}$ and from it the pseudo-inverse $(W^+)_{ab}{}^{cd}$.
To work out these computations, we follow a procedure similar to the one in
the previous subsection. In this case the components of $\sigma^A{}_{ab}$ and
$\tilde{\sigma}^A{}_{ab}$ are
\begin{eqnarray}
\sigma^{1}_{u v} = 1,\;
\sigma^{2}_{u x^1} = 1,\;
\sigma^{3}_{u x^2} = 1,\;
\sigma^{4}_{v x^1} = 1,\;
\sigma^{5}_{v x^2} = 1,\;
\sigma^{6}_{x^1 x^2} = 1,
\\[1ex]
\tilde{\sigma}^{1}_{u v} = \frac{1}{2},\;
\tilde{\sigma}^{2}_{u x^1} = \frac{1}{2},\;
\tilde{\sigma}^{3}_{u x^2} = \frac{1}{2},\;
\tilde{\sigma}^{4}_{v x^1} = \frac{1}{2},\;
\tilde{\sigma}^{5}_{v x^2} = \frac{1}{2},\;
\tilde{\sigma}^{6}_{x^1 x^2} = \frac{1}{2}.
\label{eq:soldering}
\end{eqnarray}
Now, as in subsection \ref{subsec:rt}, the practical procedure is to
start from the components of $C^{ab}{}_{cd}$
in the coordinate frame,
use \eqref{eq:cplus-end-rt}
and \eqref{eq:soldering}
to compute $C_A{}^B$ in the frame $E$, use
a suitable algorithm to compute $(W^+)_A{}^B$ from
$C_A{}^B$ and then use again \eqref{eq:cplus-end-rt},
\eqref{eq:soldering} to compute the components
of $(W^+)_{ab}{}^{cd}$ in the coordinate frame.
We display the values of $C_A{}^B$ and
$(W^+)_A{}^B$:

\begin{equation}
C_A{}^B=
\begin{pmatrix}
0 & 0 & 0 & 0 & 0 & 0\\
0 & 0 & 0 &
\dfrac{1}{2}\!\left(-H_{x^2 x^2}+H_{x^1 x^1}\right) &
H_{x^1 x^2} & 0\\
0 & 0 & 0 &
H_{x^1 x^2} &
\dfrac{1}{2}\!\left(H_{x^2 x^2}-H_{x^1 x^1}\right) & 0\\
0 & 0 & 0 & 0 & 0 & 0\\
0 & 0 & 0 & 0 & 0 & 0\\
0 & 0 & 0 & 0 & 0 & 0
\end{pmatrix},
\label{eq:weyl-PPwave}
\end{equation}

\begin{equation}
(W^+)_A{}^B=
\begin{pmatrix}
0 & 0 & 0 & 0 & 0 & 0 \\[1ex]
0 & 0 & 0 & 0 & 0 & 0 \\[1ex]
0 & 0 & 0 &
\dfrac{2\!\left(H_{x^2 x^2}-H_{x^1 x^1}\right)}
{4H_{x^1 x^2}^2+\left(H_{x^2 x^2}-H_{x^1 x^1}\right)^2}
&
\dfrac{4H_{x^1 x^2}}
{4H_{x^1 x^2}^2+\left(H_{x^2 x^2}-H_{x^1 x^1}\right)^2}
& 0 \\[3ex]
0 & 0 & 0 &
\dfrac{4H_{x^1 x^2}}
{4H_{x^1 x^2}^2+\left(H_{x^2 x^2}-H_{x^1 x^1}\right)^2}
&
\dfrac{2\!\left(H_{x^2 x^2}-H_{x^1 x^1}\right)}
{4H_{x^1 x^2}^2+\left(H_{x^2 x^2}-H_{x^1 x^1}\right)^2}
& 0 \\[3ex]
0 & 0 & 0 & 0 & 0 & 0 \\[1ex]
0 & 0 & 0 & 0 & 0 & 0
\end{pmatrix}.
\label{eq:weyl-plus-PPwave}
\end{equation}
As explained, using these matrices
we compute the components
of $(W^+)_{ab}{}^{cd}$ in the coordinate frame.
An important observation is that $(W^+)_{ab}{}^{cd}$
is trace-free and by construction,
antisymmetric in $ab$ and $cd$. However it does not have
all the symmetries of the Weyl tensor. For example
$$
W^+_{abcd}\neq W^+_{cdab},
$$
as can be checked by an explicit computation.

We have now all the ingredients to obtain all
the conditions of Theorem \ref{theo:charact-einstein-spaces-plus}. The first step is to impose
\eqref{eq:charact-einstein-spaces-sing} as explained
in Proposition \ref{prop:charact-einstein-spaces-sing}.
This yields the conditions
\begin{equation}
 H_{x^1x^2x^2}+H_{x^1x^1x^1}=0,\quad
 H_{x^2x^2x^2}+H_{x^1x^1x^2}=0.
\label{eq:necessary-ppwave}
\end{equation}
Next, using \eqref{eq:weyl-PPwave}-\eqref{eq:weyl-plus-PPwave} we
can compute the 1-form $\Lambda^{\xi}_a$ in the coordinates
of \eqref{eq:pp-wave}. The result is
\begin{equation}
\begin{aligned}
&\Lambda^\xi_adx^a=\frac{2}{3}\,dv\,\xi^{u}{}_{uv}
-\frac{2}{3}\,du\!
\left(
\xi^{v}{}_{uv}+\xi^{x^1}{}_{u x^1}+\xi^{x^2}{}_{u x^2}
\right)
\\[1ex]
&\quad
+\frac{1}{3}\,dx^2\Bigg(
2\,\xi^{u}{}_{u x^2}
+\frac{1}{\Delta}
\Big[
2\,\xi^{x^1}{}_{x^1 x^2}\,\Delta
-\left(H_{x^2 x^2}-H_{x^1 x^1}\right)
\left(H_{x^2 x^2 x^2}+H_{x^1 x^1 x^2}\right)
\\
&\hspace{7.5em}
-2H_{x^1 x^2}
\left(H_{x^1 x^2 x^2}+H_{x^1 x^1 x^1}\right)
\Big]
\Bigg)
\\[2ex]
&\quad
+\frac{1}{3}\,dx^1\Bigg(
2\,\xi^{u}{}_{u x^1}
+\frac{1}{\Delta}
\Big[
-2\,\xi^{x^2}{}_{x^1 x^2}\,\Delta
-2H_{x^1 x^2}
\left(H_{x^2 x^2 x^2}+H_{x^1 x^1 x^2}\right)
\\
&\hspace{7.5em}
+\left(H_{x^2 x^2}-H_{x^1 x^1}\right)
\left(H_{x^1 x^2 x^2}+H_{x^1 x^1 x^1}\right)
\Big]
\Bigg),
\end{aligned}
\end{equation}
Where we set $
\Delta\equiv
4\,H_{x^1 x^2}^{\,2}
+\bigl(H_{x^2 x^2}-H_{x^1 x^1}\bigr)^{2}$. This can be
further simplified with the aid of \eqref{eq:necessary-ppwave}. The
result is
\begin{equation}
\Lambda^\xi_adx^a=
 \frac{2}{3}\,dv\,\xi^{u}{}_{u v}
+\frac{2}{3}\,dx^2\!\left(\xi^{u}{}_{u x^2} + \xi^{x^1}{}_{x^1 x^2} \right)
-\frac{2}{3}\,du\!\left( \xi^{v}{}_{u v} + \xi^{x^1}{}_{u x^1} + \xi^{x^2}{}_{u x^2} \right)
+\frac{2}{3}\,dx^1\!\left(\xi^{u}{}_{u x^1} - \xi^{x^2}{}_{x^1 x^2} \right)
\end{equation}
An explicit computation reveals that
$\boldsymbol\Lambda^\xi$
is closed if and only if

\begin{equation}
\begin{aligned}
\partial_v \xi^{x^2}_{u x^2}
&= -\,\partial_u \xi^{u}_{u v}
   - \partial_v \xi^{v}_{u v}
   - \partial_v \xi^{x^1}_{u x^1},
\\[1ex]
\partial_{x^1} \xi^{u}_{u v}
&= \partial_v \xi^{u}_{u x^1}
   - \partial_v \xi^{x^2}_{x^1 x^2},
\\[2ex]
\partial_{x^1} \xi^{x^2}_{u x^2}
&= -\,\partial_u \xi^{u}_{u x^1}
   + \partial_u \xi^{x^2}_{x^1 x^2}
   - \partial_{x^1} \xi^{v}_{u v}
   - \partial_{x^1} \xi^{x^1}_{u x^1},
\\[1ex]
\partial_{x^2} \xi^{u}_{u v}
&= \partial_v \xi^{u}_{u x^2}
   + \partial_v \xi^{x^1}_{x^1 x^2},
\\[2ex]
\partial_{x^2} \xi^{x^2}_{u x^2}
&= -\,\partial_u \xi^{u}_{u x^2}
   - \partial_u \xi^{x^1}_{x^1 x^2}
   - \partial_{x^2} \xi^{v}_{u v}
   - \partial_{x^2} \xi^{x^1}_{u x^1},
\\[1ex]
\partial_{x^2} \xi^{x^2}_{x^1 x^2}
&= -\,\partial_{x^1} \xi^{u}_{u x^2}
   - \partial_{x^1} \xi^{x^1}_{x^1 x^2}
   + \partial_{x^2} \xi^{u}_{u x^1}.
\end{aligned}
\label{eq:lambda-xi-closed}
\end{equation}
Assuming the previous set of equations
we impose the conditions of Theorem \ref{theo:charact-einstein-spaces-plus}.
Since $\Lambda^\xi_a$ is explicitly known, we can compute
$\Gamma[\mathcal{C}, \nabla]^{c}{}_{ab}$ using  \eqref{eq:connection-C-nabla}
and $\mathcal{R}^\xi_{ab}$ by means of \eqref{eq:ricci-tensor-c-conection}. One
gets that $\mathcal{R}^\xi_{ab}$ is symmetric so the first of
\eqref{eq:einstein-manifold-plus} is automatically fulfilled whereas the second
condition (trace-free part of $\mathcal{R}^\xi_{ab}$ equal to zero) is equivalent
to
\begin{eqnarray}
\partial_u \xi^{x^1}_{x^1 x^2} &=&
\frac{1}{6}\Big(
-4\big(\xi^{u}_{u x^2}+\xi^{x^1}_{x^1 x^2}\big)
\big(\xi^{v}_{u v}+\xi^{x^1}_{u x^1}+\xi^{x^2}_{u x^2}\big)
-6\,\partial_u \xi^{u}_{u x^2}
-3\,\xi^{u}_{u v}\,H_{x^2}
\Big),
\\[1ex]
\partial_u \xi^{x^2}_{u x^2} &=&
-\frac{2}{3}\big(\xi^{v}_{u v}+\xi^{x^1}_{u x^1}+\xi^{x^2}_{u x^2}\big)^2
+\frac{1}{3}H
\Big(
2\,\xi^{u}_{u v}
\big(\xi^{v}_{u v}+\xi^{x^1}_{u x^1}+\xi^{x^2}_{u x^2}\big)
+3\,\partial_u \xi^{u}_{u v}
\Big)
\nonumber\\
&&
+\frac{1}{8}\Big(
-8\,\partial_u \xi^{v}_{u v}
-8\,\partial_u \xi^{x^1}_{u x^1}
+4\big(\xi^{u}_{u x^2}+\xi^{x^1}_{x^1 x^2}\big)H_{x^2}
+3H_{x^2 x^2}
\nonumber\\
&&
+4\big(\xi^{u}_{u x^1}-\xi^{x^2}_{x^1 x^2}\big)H_{x^1}
+3H_{x^1 x^1}
+4\,\xi^{u}_{u v}H_{u}
\Big),
\\[1ex]
\partial_u \xi^{x^2}_{x^1 x^2} &=&
\frac{2}{3}
\big(\xi^{v}_{u v}+\xi^{x^1}_{u x^1}+\xi^{x^2}_{u x^2}\big)
\big(\xi^{u}_{u x^1}-\xi^{x^2}_{x^1 x^2}\big)
+\partial_u \xi^{u}_{u x^1}
+\frac{1}{2}\,\xi^{u}_{u v}H_{x^1},
\\[1ex]
\partial_v \xi^{u}_{u v} &=&
\frac{2}{3}\,\xi^{u}_{u v},
\\[1ex]
\partial_v \xi^{x^1}_{x^1 x^2} &=&
\frac{2}{3}\,\xi^{u}_{u v}
\big(\xi^{u}_{u x^2}+\xi^{x^1}_{x^1 x^2}\big)
-\partial_v \xi^{u}_{u x^2},
\\[1ex]
\partial_v \xi^{x^2}_{x^1 x^2} &=&
\frac{2}{3}\,\xi^{u}_{u v}
\big(-\xi^{u}_{u x^1}+\xi^{x^2}_{x^1 x^2}\big)
+\partial_v \xi^{u}_{u x^1},
\\[1ex]
\partial_{x^1} \xi^{x^1}_{x^1 x^2} &=&
\frac{2}{3}
\big(\xi^{u}_{u x^2}+\xi^{x^1}_{x^1 x^2}\big)
\big(\xi^{u}_{u x^1}-\xi^{x^2}_{x^1 x^2}\big)
-\partial_{x^1} \xi^{u}_{u x^2},
\\[1ex]
\partial_{x^1} \xi^{x^2}_{x^1 x^2} &=&
-\frac{2}{3}\Big(
-\xi^{u}_{u v}
\big(\xi^{v}_{u v}+\xi^{x^1}_{u x^1}+\xi^{x^2}_{u x^2}\big)
+\big(\xi^{u}_{u x^1}-\xi^{x^2}_{x^1 x^2}\big)^2
\Big)
\nonumber\\
&&
+\partial_u \xi^{u}_{u v}
+\partial_{x^1} \xi^{u}_{u x^1},
\\[1ex]
\partial_{x^2} \xi^{x^1}_{x^1 x^2} &=&
\frac{2}{3}\Big(
\big(\xi^{u}_{u x^2}+\xi^{x^1}_{x^1 x^2}\big)^2
-\xi^{u}_{u v}
\big(\xi^{v}_{u v}+\xi^{x^1}_{u x^1}+\xi^{x^2}_{u x^2}\big)
\Big)
-\partial_u \xi^{u}_{u v}
-\partial_{x^2} \xi^{u}_{u x^2}.
\label{eq:xi-system}
\end{eqnarray}
To assess whether the system admits integrability conditions involving
only the background function \(H\), we regard the equations as a
first--order PDE system for the unknown fields \(\xi\),
with \(H(u,x^1,x^2)\) treated as prescribed data. Formal integrability
is then analyzed by computing compatibility conditions obtained from
commutators of mixed derivatives and by prolonging the system as
necessary. The resulting relations are systematically simplified with
the aim of eliminating all occurrences of \(\xi\) and their
derivatives. If, after this elimination process, a nontrivial relation
remains that depends only on \(H\) and its derivatives, such a
relation constitutes an integrability condition on \(H\) that
should be appended to \eqref{eq:necessary-ppwave}.
If no such a relation exists then the conditions of
Theorem \ref{theo:charact-einstein-spaces-plus} reduce to just \eqref{eq:necessary-ppwave}
which would then become necessary and sufficient conditions for \eqref{eq:pp-wave}
to be conformal to an Einstein space.

Note that \eqref{eq:necessary-ppwave} is consistent with previous work
because in \cite{BrozosVazquez2019IsotropicQE} it is shown that
$\nabla_aC^{a}{}_{bcd}=0$ iff \eqref{eq:pp-wave} is conformal to an Einstein
space. But a straightforward computation shows that
$\nabla_aC^{a}{}_{bcd}=0$ is actually equivalent to \eqref{eq:necessary-ppwave}.
Hence the result of \cite{BrozosVazquez2019IsotropicQE} means that
\eqref{eq:xi-system} entails no integrability conditions over $H(u,x^1,x^2)$.

A notable example
is the trivial solution of the equations \eqref{eq:xi-system} in which all \(\xi^a{}_{bc}\) vanish identically;
in this case, the above system reduces to a single
nontrivial condition on \(H\), namely the Laplace equation
\begin{equation}
 H_{x^1x^1}+H_{x^2x^2}=0,
\label{eq:laplace}
\end{equation}
showing that \(H\) must be harmonic in the transverse variables.
This correspond to the condition that the plane fronted wave is an Einstein
space with zero cosmological constant. Therefore by imposing \(\xi^a{}_{bc}=0\) we recover
the standard Einstein vacuum solution condition (note that in this case
\eqref{eq:necessary-ppwave} is a consequence of \eqref{eq:laplace}).

\section{Conclusions}

In this work, we have successfully generalized the construction of the $\mathcal{C}$-connection,
originally introduced in \cite{garciaparrado2024einstein} for four-dimensional
spacetimes and for an invertible Weyl endomorphism, to arbitrary dimension $D$ and to
cases where the Weyl endomorphism is singular.

The core of our procedure relied on the algebraic properties of the Weyl
tensor regarded as an endomorphism on the bundle of $2$-forms.
In Lemma \ref{lemm:fund} we obtained a fundamental
formula which relates $\Upsilon_{a}$ with metric concomitants
of the physical and unphysical spaces. If the
Weyl endomorphism is invertible, only metric concomitants appear and if
it is not then one has an additional tensor $\xi^{a}{}_{bc}{}$ whose role is also studied.

As for the algebraic results, we introduced the conformally covariant derivative operators $D_{a}^{s(p,q)}$
that depend on the conformal weight $s$ and the tensor character $p,q$
and defined the ${\mathcal C}$-connection, $\mathcal{C}_{a}$, in the particular case in which
$s=0$.
We also introduced its singular counterpart $\mathcal{C}_{a}^{\xi}$
for a non-invertible Weyl endomorphism. All this generalizes work done in \cite{garciaparrado2024einstein}

Furthermore, we applied this formalism to the problem of characterizing conformal Einstein spaces. We provided necessary and
sufficient conditions for a pseudo-Riemannian manifold to be conformally related to an Einstein space. These conditions,
encapsulated in Theorems \ref{theo:charact-einstein-spaces} and \ref{theo:charact-einstein-spaces-plus}, are expressed
entirely in terms of concomitants of the unphysical metric and differential properties. Theorem
\ref{theo:charact-einstein-spaces} is written entirely in terms of metric concomitants of the unphysical metric and
therefore it enables us to decide algorithmically whether a given pseudo-Riemannian metric meeting the Theorem's
hypotheses is conformal to an Einstein space.

Finally, we explored the practical implications of our results through
examples, such as conformally flat, Robinson-Trautman-like and
plane fronted waves.
These demonstrate the robustness of our approach.
This formalism opens new ways for studying conformal
symmetries and geometric invariants in higher-dimensional theories where the
algebraic classification of the Weyl tensor plays a significant role.

\section*{Acknowledgements}
We thank Prof. José M. Senovilla for reading the
manuscript and useful comments.
Supported by Spanish MICINN Project No. PID2021-126217NB-I00.

\appendix

\section{The Moore-Penrose pseudo-inverse}
\label{app:pseudo-inverse}
In this appendix we review known facts about the Moore-Penrose pseudo-inverse that are used in
the paper. We will make our exposition as self-contained as possible. More details can be found
in e.g. \cite{Bajo2021}.

\begin{definition}[Moore-Penrose pseudoinverse]
	Let $A \in M_{m \times n}(\mathbb{C})$, $m, n\in\mathbb{N}$.
	The Moore-Penrose pseudoinverse of $A$, denoted by
	$A^+ \in M_{n \times m}(\mathbb{C})$,
	is the unique matrix that satisfies the following equations:
	\begin{align}
		&1.\quad A A^{+} A = A \label{eq:pseudo-1}\\
		&2.\quad A^{+} A A^{+} = A^{+} \label{eq:pseudo-2}\\
		&3.\quad (A A^{+})^{*} = A A^{+} \label{eq:pseudo-3}\\
		&4.\quad (A^{+} A)^{*} = A^{+} A, \label{eq:pseudo-4}
	\end{align}

	where $(\cdot)^{*}$ denotes the conjugate transpose.

	\label{def:moore-penrose-pseudoinverse}
\end{definition}

If $A$ is invertible, then $A^+=A^{-1}$ and the usual notion of inverse
is recovered. Elementary properties of the pseudo-inverse
are  $(\lambda A)^+=A^+ / \lambda$, $\lambda\in\mathbb{R}$ and
$(A^*)^+=(A^+)^*$.
For us the main interest of the pseudo-inverse is that it
can be used to express the general solution of any linear system written in matrix
form
\begin{equation}
 A X=B,
 \label{eq:linear-system}
\end{equation}
where $X\in M_{n\times 1}(\mathbb{C})$ is the unknown and
$A\in M_{m \times n}(\mathbb{C})$, $B\in M_{m\times 1}(\mathbb{C})$
are given matrices.
\begin{theorem}[See \cite{James_1978}]
	The linear system $AX=B$ has a solution if and only if $AA^+B=B$.
	Moreover, in that case the general solution is given by
		\begin{equation}
		X=A^+B+(I-A^+A)w
		\label{eq:pseudoinverse-sol}
	\end{equation}
where $I$ is the identity matrix in $M_n({\mathbb{C}})$ and
$w\in M_{n\times 1}(\mathbb{C})$ is arbitrary.

	\label{theo:compatibility-condition}
\end{theorem}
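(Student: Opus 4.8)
The plan is to prove the two assertions separately: first the solvability criterion $AA^+B = B$, then the description of the full solution set. Throughout, the workhorse is the defining relation \eqref{eq:pseudo-1}, namely $AA^+A = A$. It is worth remarking in advance that the symmetry conditions \eqref{eq:pseudo-3}--\eqref{eq:pseudo-4} play no role here (they are needed only to pin down $A^+$ uniquely), so the argument uses exclusively \eqref{eq:pseudo-1} together with the associativity of matrix multiplication.

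For the solvability criterion I would argue both implications. If $AA^+B = B$, then $X = A^+B$ is manifestly a solution, since $AX = AA^+B = B$. Conversely, if the system admits any solution $X_0$, i.e. $AX_0 = B$, then substituting and applying \eqref{eq:pseudo-1} gives $AA^+B = AA^+(AX_0) = (AA^+A)X_0 = AX_0 = B$, which is the desired compatibility condition.

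For the general solution I would assume the compatibility condition holds and verify the two inclusions. First, every $X$ of the form $A^+B + (I - A^+A)w$ solves the system: applying $A$ and using \eqref{eq:pseudo-1} in the guise $A - AA^+A = 0$ gives $AX = AA^+B + (A - AA^+A)w = B + 0 = B$. Conversely, if $AX = B$, then left-multiplying by $A^+$ yields $A^+AX = A^+B$, whence $A^+B + (I - A^+A)X = A^+B + X - A^+AX = A^+B + X - A^+B = X$; thus the choice $w = X$ exhibits $X$ in the required form, and the parametrization therefore exhausts the solution set.

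There is no serious obstacle in this proof: it is a direct manipulation of the pseudo-inverse identities, and the one genuinely substantive input, \eqref{eq:pseudo-1}, is used in essentially the same way at each stage. The only point demanding a moment's care is recognizing in the last step that a given solution $X$ can serve as its own parameter $w$, which is precisely what closes the ``every solution is of this form'' direction; a secondary observation, useful for the exposition, is that the symmetry axioms are never invoked.
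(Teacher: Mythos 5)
Your proposal is correct and takes essentially the same approach as the paper: the solvability criterion is argued by the identical two implications, and the only substantive identity used anywhere is $AA^{+}A=A$. The sole cosmetic difference is in the ``every solution has this form'' step, where you exhibit a solution as its own parameter $w=X$, while the paper decomposes $X=X_p+X_k$ and proves $\ker(A)=\operatorname{Im}(I-A^{+}A)$ by double inclusion --- the same computation packaged slightly differently (and your version incidentally sidesteps the paper's typos $X_p=A^{+}A$ and $\operatorname{Im}(k)$).
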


\begin{proof}
	 To show the ``only if'' part we will multiply both sides of
	 the equation $AX=B$ by $AA^+$ getting
	 \begin{equation}
	 	AA^+AX=AA^+B.
	 \end{equation}

	 Using the pseudo-inverse property  $AA^+A=A$, the previous equation transforms into
	 $AX=AA^+B$ which is equivalent to $B=AA^+B$.

	 To show the ''if'' part, assume that $AA^+B=B$ holds. Then it is clear that
	 $A^+B$ is a solution of \eqref{eq:linear-system}.

	Next we show that \eqref{eq:pseudoinverse-sol} is indeed the general
	solution of \eqref{eq:linear-system}. Elementary algebra tells us that
	the general solution can be written as $X=X_p+X_k$ where $X_p$ is a particular
	solution of \eqref{eq:linear-system} and $X_k$ is any element in
	$\ker(A)$. The previous considerations show that $X_p=A^+A$ is a particular solution.
	Hence to finish the proof we need to show that any $X_k$ can be
	written as $X_k=(I-A^+A)w$ with $w\in M_{n\times 1}(\mathbb{C})$ arbitrary or in other
	words $\ker(A)=\operatorname{Im}(I-A^+A)$. Let $Y\in \operatorname{Im}(I-A^+A)$.
	Then $Y=(I-A^+A)w$ and
	\begin{equation}
		AY=A(I-A^+A)w=(AI-AA^+A)w=(A-A)w=0,
	\end{equation}
thus $\operatorname{Im}(I-A^+A) \subseteq \ker(A)$. Conversely pick $k\in\ker(A)$.
Then
	\begin{equation}
		(I-A^+A)k=Ik-A^+Ak=I k =k.
	\end{equation}

	So, $\ker(A) \subseteq \operatorname{Im}(k) $ and we conclude
	that $ \ker(A) = \operatorname{Im}(k)$ as desired.
\end{proof}

Let $A$ be a matrix (which can be non-invertible), then we denote $A^+$ its Moore-Penrose pseudoinverse which can be used to solve any linear system $AX=B$ by

Using the index notation we can write \eqref{eq:pseudoinverse-sol} as

\begin{equation}
	X^{I}=(A^+)^{I}{}_{J}{}B^{J}+(\delta^{I}{}_{J}-(A^+)^{I}{}_{K}{}A^{K}{}_{J}{})w^{J}.
	\label{eq:pseudo-inverse-indices}
\end{equation}

Now, we will see hoy it adapts to the fields of endomorphisms introduced in \eqref{eq:weyl-endomorphism}, where we regarded the Weyl tensor $C_{abcd}$ as a linear map (endomorphism) 

\begin{equation} \psi:\ \Omega^{2}(\mathcal{M})\ \longrightarrow\ \Omega^{2}(\mathcal{M}),\qquad C_{ab}{}^{cd}\longrightarrow C_{I}{}^{cd}=\psi_{I}{}^{ab} C_{ab}{}^{cd}.
	\label{eq:endomorphism_field_weyl_def}
\end{equation}

This field of endomorphisms, which is a 6-dimensional space of bivectors, is defined to respect the pairs antisymmetries of the Weyl tensor, and the rest of the symmetries (double pairs symmetry, traceless and Bianchi identity) keep as a consequence of the definition of Weyl tensor. 

Now for the Moore-Penrose pseudoinverse of the Weyl tensor, it been defined (and used in our calculations) as a linear map from the field of endomorphisms, so $W^+\in\Omega^2(\mathcal{M})$. This make sure that the pairs antisymmetry can also be found in the pseudoinverse of the Weyl tensor.

Note that the application defined in \eqref{eq:endomorphism_field_weyl_def} is linear, so we can find its inverse and recover the tensor form of the Weyl tensor and its pseudoinverse, but from the last one, as it is defined from \ref{def:moore-penrose-pseudoinverse}, it does not represent the inverse of the Weyl tensor (unless it is invertible, where $W^+=C^{-1}$) and also, it is only guaranteed to have the the pairs antisymmetry found on the original Weyl tensor, as a consequence of the definition from the field of endomorphisms, but the double pairs antisymmetry, traceless and Bianchi identity properties are not guaranteed to be found in the pseudoinverse either seen as an endomorphism or as a tensor.


\begin{thebibliography}{10}

\bibitem{Aubin_1986}
Thierry Aubin.
\newblock {\em Le Problème de Yamabe concernant la courbure scalaire}, page
  66–72.
\newblock Springer Berlin Heidelberg, 1986.

\bibitem{Bajo2021}
Ignacio Bajo.
\newblock {Computing Moore–Penrose Inverses with Polynomials in Matrices}.
\newblock {\em American Mathematical Monthly}, 128(5):446–456, 2021.

\bibitem{branson1989conformal}
Thomas~P. Branson.
\newblock {Conformal transformation, conformal change, and conformal
  covariants}.
\newblock In {\em {Proceedings of the Winter School "Geometry and Physics"}},
  page [115]–134. Circolo Matematico di Palermo, 1989.

\bibitem{Branson_1991}
Thomas~P. Branson and Bent Ørsted.
\newblock Explicit functional determinants in four dimensions.
\newblock {\em Proceedings of the American Mathematical Society},
  113(3):669–669, 1991.

\bibitem{brendle2010recentprogressyamabeproblem}
S.~Brendle and F.~C. Marques.
\newblock Recent progress on the yamabe problem, 2010.

\bibitem{brinkmann_riemann_1924}
H.~W. Brinkmann.
\newblock Riemann spaces conformal to {Einstein} spaces.
\newblock {\em Mathematische Annalen}, 91(3-4):269--278, September 1924.

\bibitem{brinkmann_einstein_1925}
H.~W. Brinkmann.
\newblock Einstein spaces which are mapped conformally on each other.
\newblock {\em Mathematische Annalen}, 94(1):119--145, December 1925.

\bibitem{Brinkmann1925}
H.~W. Brinkmann.
\newblock {Einstein spaces which are mapped conformally on each other}.
\newblock {\em Mathematische Annalen}, 94:119–145, 1925.

\bibitem{chang_what_2008}
S.-Y.~Alice Chang, Michael Eastwood, Bent Ørsted, and Paul~C. Yang.
\newblock What is {Q}-{Curvature}?
\newblock {\em Acta Applicandae Mathematicae}, 102(2-3):119--125, 2008.

\bibitem{Fradkin_1982}
E.~S. Fradkin and A.A. Tseytlin.
\newblock Asymptotic freedom in extended conformal supergravities.
\newblock {\em Physics Letters B}, 110(2):117–122, 1982.

\bibitem{garciaparrado2024einstein}
Alfonso García-Parrado.
\newblock {The $\mathcal C$-connection and the 4-dimensional Einstein spaces}.
\newblock {\em Journal of Geometry and Physics}, 210:105419, 2025.

\bibitem{gover_obstructions_2006}
A.~Rod Gover and P.~Nurowski.
\newblock Obstructions to conformally {Einstein} metrics in n dimensions.
\newblock {\em Journal of Geometry and Physics}, 56(3):450--484, 2006.

\bibitem{GJMSOperator1992}
C.~Robin Graham, Ralph Jenne, Lionel~J. Mason, and George A.~J. Sparling.
\newblock Conformally invariant powers of the laplacian, i: Existence.
\newblock {\em Journal of the London Mathematical Society}, s2-46(3):557--565,
  1992.

\bibitem{GriffithsPodolsky2009}
J.~B. Griffiths and J.~Podolsk{\'y}.
\newblock {\em Exact Space-Times in Einstein's General Relativity}.
\newblock Cambridge University Press, 2009.

\bibitem{James_1978}
M.~James.
\newblock {The generalised inverse}.
\newblock {\em The Mathematical Gazette}, 62(420):109–114, 1978.

\bibitem{kozameh_conformal_1985}
Carlos~N. Kozameh, Ezra~T. Newman, and K.~P. Tod.
\newblock Conformal {Einstein} spaces.
\newblock {\em General Relativity and Gravitation}, 17(4):343--352, 1985.

\bibitem{juanbook}
Juan A.~Valiente Kroon.
\newblock {\em {Conformal methods in general relativity}}.
\newblock {Cambridge Monographs on Mathematical Physics}. Cambridge University
  Press, Cambridge, 2016.

\bibitem{BrozosVazquez2019IsotropicQE}
 M.~Brozos-V{\'a}zquez, E.~Garc{\'\i}a-R{\'\i}o, and X.~Valle-Regueiro,
\newblock Isotropic quasi-Einstein manifolds,
\newblock{\em Classical and Quantum Gravity} \textbf{36}, no.~24, 245005.
2019.

\bibitem{Paneitz2008}
Stephen~M Paneitz.
\newblock A quartic conformal covariant differential operator for arbitrary
  pseudo-riemannian manifolds (summary).
\newblock {\em SIGMA. Symmetry, Integrability and Geometry: Methods and
  Applications}, 4:036, 2008.
\newblock Original preprint 1983.

\bibitem{robinsontrautman1960}
I.~Robinson and A.~Trautman.
\newblock Some spherical gravitational waves in general relativity.
\newblock {\em Proceedings of the Royal Society of London. Series A},
  265(1323):463--473, 1962.

\bibitem{Schoen_1984}
Richard Schoen.
\newblock Conformal deformation of a {R}iemannian metric to constant scalar
  curvature.
\newblock {\em Journal of Differential Geometry}, 20(2), 1984.

\bibitem{schouten_ricci-calculus_1954}
J.~A. Schouten.
\newblock {\em Ricci-{Calculus}}.
\newblock Springer Berlin Heidelberg, Berlin, Heidelberg, 1954.

\bibitem{Stephani2003}
H.~Stephani, D.~Kramer, M.~MacCallum, C.~Hoenselaers, and E.~Herlt.
\newblock {\em Exact Solutions of Einstein's Field Equations}.
\newblock Cambridge University Press, 2 edition, 2003.

\bibitem{szekeres_spaces_1963}
P.~Szekeres.
\newblock Spaces conformal to a class of spaces in general relativity.
\newblock {\em Proceedings of the Royal Society of London. Series A.
  Mathematical and Physical Sciences}, 274(1357):206--212, 1963.

\bibitem{Trautman1962}
A.~Trautman.
\newblock Radiation and boundary conditions in the theory of gravitation.
\newblock {\em Bulletin of the Academy of Polish Sciences}, 6:407--412, 1958.

\bibitem{trudinger1968}
Neil~S. Trudinger.
\newblock Remarks concerning the conformal deformation of {R}iemannian
  structures on compact manifolds.
\newblock {\em Annali della Scuola Normale Superiore di Pisa - Scienze Fisiche
  e Matematiche}, Ser. 3, 22(2):265--274, 1968.

\bibitem{yamabe1960deformation}
Hidehiko Yamabe.
\newblock On a deformation of riemannian structures on compact manifolds.
\newblock 1960.

\end{thebibliography}

\end{document}